\theoremstyle{plain}
\newtheorem{theorem}{Theorem}[section]
\newtheorem{proposition}[theorem]{Proposition}
\newtheorem{corollary}[theorem]{Corollary}
\newtheorem{lemma}[theorem]{Lemma}
\theoremstyle{definition}
\newtheorem{remark}[theorem]{Remark}
\title[Comparison Theorems]{Extensions of Perron-Frobenius Theory}
\author[N.~Gao]{Niushan Gao}
\address{Department of Mathematical and Statistical Sciences, University of Alberta, Edmonton, AB, Canada T6G\,2G1}
\email{niushan@ualberta.ca}
\date{\today}
\subjclass[2010]{Primary: 47L20. Secondary: 47B10, 47B37}
\keywords{Comparison theorems, irreducible operators}
\begin{document}

\begin{abstract}
The classical Perron-Frobenius theory asserts that for two matrices $A$ and $B$, if $0\leq B \leq A$ and $r(A)=r(B)$ with $A$ being irreducible, then $A=B$. This was recently extended in \cite{crad} to positive operators on 
$L_p(\mu)$ with either $A$ or $B$ being irreducible and
power compact. In this paper, we extend the results to irreducible operators on arbitrary Banach lattices.
\end{abstract}
\maketitle

\section{Introduction and Preliminaries}\label{sec1}
Throughout this paper, $X$ always denotes a real Banach lattice with $\dim X>1$, and $T$ always stands for a non-zero positive operator on $X$. Recall that a positive operator is said to be \textbf{ideal irreducible} if 
it has no non-zero proper invaraint closed ideals, and \textbf{band irreducible} if it has no non-zero proper invariant bands. These definitions of irreducibity coincide on order continuous Banach lattices; 
in particular, on $L_p$ spaces for $1\leq p<\infty$ and on $\mathbb{R}^n$. Moreover, for a positive matrix $A$, it is easily seen that $A$ is irreducible if and only if $A$ does not have a block form 
$\left[\begin{smallmatrix}
 A_{11}&A_{12}\\
0&A_{22}
 \end{smallmatrix}\right]
$ under any permutation of the standard 
basis.\par
The classical Perron-Frobenius theory (see Chapter~8, \cite{abr}) asserts the following.
\begin{theorem}\label{ree}
 Let $A>0$ be an irreducible matrix. Then its spectral radius $r(A)>0$, $r(A)$ is a simple root of its characteristic polynomial $f_A$, and $\ker(r(A)-A)=\mathrm{Span}\{x_0\}$ for some strictly positive vector $x_0$.
\end{theorem}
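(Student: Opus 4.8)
The plan is to build everything on the single structural fact that irreducibility of $A\geq0$ forces the matrix $(I+A)^{n-1}$ to be strictly positive (every entry $>0$), where $n=\dim X$. First I would record this: the $(i,j)$ entry of $(I+A)^{n-1}$ is a sum of products taken along walks of length $<n$ in the directed graph of $A$, and irreducibility (no proper invariant coordinate subspace, equivalently strong connectivity of that graph) guarantees each such entry is positive. From this I immediately get $r(A)>0$: if $r(A)=0$ then $A$ would be nilpotent, but a strongly connected graph on $n\geq2$ vertices contains a cycle, so some $A^k$ has a positive diagonal entry, whence $\mathrm{tr}(A^k)>0$ and $A$ cannot be nilpotent.

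Next I would produce a positive eigenvector by a fixed-point argument. On the simplex $S=\{x\geq0:\sum_i x_i=1\}$ I first check that $Ax\neq0$ for every $x\in S$ (otherwise the support of $x$ would span a proper invariant coordinate subspace, contradicting irreducibility), so $g(x)=Ax/\|Ax\|_1$ is a continuous self-map of $S$. Brouwer's theorem yields a fixed point $x_0$, giving $Ax_0=\lambda x_0$ with $\lambda=\|Ax_0\|_1>0$ and $x_0\geq0$, $x_0\neq0$. Applying $(I+A)^{n-1}$ and invoking strict positivity upgrades $x_0$ to a strictly positive vector. Running the same construction on the transpose $A^{\mathsf T}$ (which is again positive and irreducible, its graph being the reversal) produces a strictly positive left eigenvector $y_0$ with $y_0^{\mathsf T}A=\mu\, y_0^{\mathsf T}$ and $\mu>0$.

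To identify $\lambda$ with $r(A)$ I would exploit $y_0$. For any eigenvalue $\nu$ of $A$ with eigenvector $z$, the entrywise inequality $|\nu|\,|z|=|Az|\leq A|z|$ holds; pairing with $y_0>0$ gives $|\nu|\,(y_0^{\mathsf T}|z|)\leq \mu\,(y_0^{\mathsf T}|z|)$, and since $y_0^{\mathsf T}|z|>0$ I conclude $|\nu|\leq\mu$, so $\mu=r(A)$. Pairing $y_0^{\mathsf T}Ax_0$ in two ways then forces $\lambda=\mu=r(A)=:r$. Thus $r>0$ carries the strictly positive eigenvector $x_0$.

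Finally, simplicity. For the geometric part, given any real $w$ with $Aw=rw$, I would set $t=\min_i w_i/(x_0)_i$ so that $v:=w-tx_0\geq0$ has a zero coordinate yet still satisfies $Av=rv$; were $v\neq0$, then $(I+A)^{n-1}v=(1+r)^{n-1}v$ would be strictly positive, contradicting the zero coordinate, so $v=0$ and $\ker(r-A)=\mathrm{Span}\{x_0\}$. For algebraic simplicity I would rule out a Jordan chain: since the eigenspace is one-dimensional, a block of size $\geq2$ would furnish $w$ with $(A-r)w=x_0$, and then $0=\bigl((A^{\mathsf T}-r)y_0\bigr)^{\mathsf T}w=y_0^{\mathsf T}(A-r)w=y_0^{\mathsf T}x_0>0$, a contradiction; hence $r$ is a simple root of $f_A$. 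I expect the main obstacle to be this algebraic-simplicity step, since geometric simplicity alone does not preclude generalized eigenvectors — the left-eigenvector pairing is precisely what closes the gap, so constructing $y_0$ cleanly is the crux of the argument.
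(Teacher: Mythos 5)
Your proof is correct. A point of comparison worth noting: the paper never proves this theorem at all --- it is quoted as classical background with a citation to Chapter~8 of Abramovich--Aliprantis, and the paper's actual work consists of infinite-dimensional generalizations of exactly the ingredients you assembled. Your argument is the standard Wielandt-style finite-dimensional proof: strict positivity of $(I+A)^{n-1}$ from strong connectivity, Brouwer's fixed point theorem on the simplex for existence of the eigenvector, the strictly positive left eigenvector to pin the eigenvalue at $r(A)$ and to kill Jordan chains, and the subtraction trick $v = w - t x_0$ for geometric simplicity. Every step checks out, including the two places that are easiest to fumble: you correctly use that $\mu\in\sigma(A^{\mathsf T})=\sigma(A)$ to close the identification $\mu = r(A)$, and you correctly separate geometric simplicity (the $(I+A)^{n-1}$ positivity argument) from algebraic simplicity (the pairing $y_0^{\mathsf T}x_0>0$ versus $y_0^{\mathsf T}(A-r)w=0$), which is indeed the step that geometric simplicity alone cannot deliver. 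The dictionary with the paper's machinery is worth internalizing: your $(I+A)^{n-1}\gg 0$ is the matrix case of Lemma~\ref{chi1}\,\eqref{chili2} (irreducibility $\Leftrightarrow$ $\sum_1^\infty T^n/\lambda^n$ strongly expanding); Brouwer is replaced in infinite dimensions by Lemma~\ref{seig} (Schaefer's version of Krein--Rutman, requiring $r(T)$ to be a pole of the resolvent, a hypothesis that is automatic for matrices); your left-eigenvector pairing and subtraction trick reappear as Lemma~\ref{copa}; and your algebraic-simplicity step is precisely the pole-simplicity argument in the proof of Lemma~\ref{eig23}, where the same pairing $x_0^*\bigl((T-r(T))^{m-1}Px\bigr)$ forces the pole order $m$ to be $1$. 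So your proof is a legitimate, self-contained route to the stated theorem, and it is structurally the finite-dimensional shadow of the paper's Sections~\ref{sec1}--\ref{sec3}.
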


\begin{theorem}\label{cpft}Suppose $A$ and $B$ are two matrices such that $0\leq B\leq A$ and $r(A)=r(B)$. If $A$ is irreducible, then $A=B$.
\end{theorem}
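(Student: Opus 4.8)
The plan is to extract Perron–Frobenius data from Theorem~\ref{ree} applied both to $A$ and to its transpose. Write $r := r(A) = r(B)$. Since the transpose of an irreducible matrix is again irreducible (its associated digraph is merely the reverse, so still strongly connected) and satisfies $r(A^{T}) = r(A) = r$, Theorem~\ref{ree} applied to $A^{T}$ supplies a strictly positive left Perron vector $z_0 > 0$ with $A^{T} z_0 = r z_0$. On the other side, since $B \geq 0$, the classical Perron–Frobenius theorem for (possibly reducible) non-negative matrices (Chapter~8, \cite{abr}) gives a non-negative eigenvector $y_0 \geq 0$, $y_0 \neq 0$, with $B y_0 = r y_0$; this is the step where the hypothesis $r(B) = r$ is used.

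Next I would run the standard sandwich estimate. Using $A \geq B$ entrywise together with $z_0 > 0$ and $y_0 \geq 0$,
\[
 r\, z_0^{T} y_0 = (A^{T} z_0)^{T} y_0 = z_0^{T} A y_0 \;\geq\; z_0^{T} B y_0 = z_0^{T}(r y_0) = r\, z_0^{T} y_0 ,
\]
so equality holds throughout and hence $z_0^{T}(A - B) y_0 = 0$. Because $A - B \geq 0$ and $y_0 \geq 0$, the vector $(A - B) y_0$ is non-negative; pairing it with the strictly positive $z_0$ and obtaining $0$ forces $(A - B) y_0 = 0$, that is, $A y_0 = B y_0 = r y_0$.

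The decisive step, and the one I expect to be the main obstacle, is to upgrade $y_0$ to a \emph{strictly} positive vector, since $B$ is not assumed irreducible and its Perron vector could a priori have zero entries. Here the previous computation pays off: $y_0$ is now an eigenvector of $A$ for the eigenvalue $r$, so by Theorem~\ref{ree} we have $y_0 \in \ker(r - A) = \mathrm{Span}\{x_0\}$ with $x_0 > 0$. As $y_0 \geq 0$ and $y_0 \neq 0$, it must be a positive multiple of $x_0$, whence $y_0 > 0$ entrywise.

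Finally, from $(A - B) y_0 = 0$ with $y_0 > 0$ and $A - B \geq 0$ I would read off $\sum_j (A - B)_{ij}(y_0)_j = 0$ for each $i$; every summand is non-negative, so each $(A - B)_{ij}(y_0)_j = 0$, and strict positivity of $(y_0)_j$ yields $(A - B)_{ij} = 0$ for all $i, j$. Thus $A = B$, as claimed. As an alternative to the strict-positivity argument, one could instead work with the support $S$ of $y_0$, show that $A$ and $B$ agree on the columns indexed by $S$ while $A$ vanishes on the block with rows outside $S$ and columns in $S$, and conclude that $A$ is block upper-triangular relative to $S$ --- contradicting irreducibility unless $S$ is the whole index set; but routing through Theorem~\ref{ree} as above is cleaner.
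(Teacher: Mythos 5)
Your proof is correct: transpose-irreducibility of $A$, the existence of a nonnegative eigenvector $y_0$ of $B$ for $r(B)$, the sandwich identity $r\,z_0^{T}y_0 = z_0^{T}Ay_0 \geq z_0^{T}By_0 = r\,z_0^{T}y_0$ forcing $(A-B)y_0=0$, and the upgrade of $y_0$ to strict positivity via $\ker(r-A)=\mathrm{Span}\{x_0\}$ are all sound steps. The paper itself gives no proof of Theorem~\ref{cpft} (it is quoted as classical background from Chapter~8 of \cite{abr}), but your argument is precisely the finite-dimensional specialization of the route the paper takes for its infinite-dimensional generalizations --- pairing a strictly positive eigenfunctional of the dominating operator against a positive eigenvector of the dominated one and exploiting strict positivity twice, as in Lemma~\ref{copa}, Lemma~\ref{copa0} and Lemma~\ref{gc0} (your $z_0$ is the paper's $x_0^*$, your $y_0$ its $x_0$) --- so in substance it matches the paper's approach.
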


Theorem~\ref{ree} has been generalized to positive operators on Banach lattices by various authors; see, e.g.~\cite{sawa,scha2,scha3,pag,sab,abr,grob1,grob2,kit}, etc. 
Extensions of Theorem~\ref{cpft} have also been considered by some authors; see, e.g., \cite{marek1,alek}. 
Results of this type are often referred to as \emph{comparison theorems}. 
The following comparison theorem is of interest.
\begin{theorem}[\cite{crad}]\label{crad0}Suppose $0\leq S\leq T$ on $L^p(\mu)$ where $1<p<\infty$ and $\mu$ is $\sigma$-finite. Suppose also $r(T)=r(S)$. Then $T=S$ if either $T$ or $S$ is irreducible and power compact.
\end{theorem}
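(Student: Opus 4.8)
The plan is to prove directly that $r(T)=r(S)$ forces $T=S$, treating the two hypotheses separately since the roles of $S$ and $T$ are genuinely asymmetric. Write $r=r(T)=r(S)$ and $D=T-S\ge 0$; the goal is to show $D=0$. I will freely use that for $1<p<\infty$ both $L^{p}(\mu)$ and its dual have order continuous norm, so that ideal and band irreducibility coincide, the Dodds--Fremlin theorem applies (from $0\le S^{n}\le T^{n}$, compactness of a power of $T$ forces compactness of the same power of $S$), and the operator form of Theorem~\ref{ree} holds: an irreducible power-compact positive operator $U$ has $r(U)>0$, a quasi-interior eigenvector at $r(U)$, a strictly positive eigenfunctional of $U^{*}$ at $r(U)$, and $r(U)$ is a first-order pole of the resolvent $R(\lambda,U)=(\lambda-U)^{-1}$ with rank-one residue. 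Two elementary facts recur: the \emph{localization lemma}---if $D\ge 0$, $x$ is quasi-interior and $Dx=0$, then $D=0$, since $0\le z\le\lambda x$ gives $0\le Dz\le\lambda Dx=0$ and such $z$ are norm dense by order continuity---and the \emph{heredity of irreducibility}: any $T$-invariant closed ideal $J$ is $S$-invariant (for $x\in J_{+}$, $0\le Sx\le Tx\in J$ and $J$ is solid), so $S$ irreducible forces $T$ irreducible.

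First I would settle the case in which $T$ is irreducible and power compact. By Dodds--Fremlin $S$ is then power compact too, and since $r>0$ the Krein--Rutman theorem gives an eigenvector $y\ge 0$, $y\ne 0$, with $Sy=ry$, while the operator Perron--Frobenius theorem supplies a strictly positive $\phi$ with $T^{*}\phi=r\phi$. Pairing yields
\[
\langle\phi,Dy\rangle=\langle T^{*}\phi,y\rangle-\langle\phi,Sy\rangle=r\langle\phi,y\rangle-r\langle\phi,y\rangle=0 .
\]
As $Dy\ge 0$ and $\phi$ is strictly positive, $Dy=0$, i.e.\ $Ty=Sy=ry$. Irreducibility of $T$ makes its eigenspace at $r$ one dimensional and spanned by a quasi-interior vector, so $y$ is quasi-interior, and the localization lemma gives $D=0$.

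The harder case is when $S$ is irreducible and power compact; by heredity $T$ is irreducible, but it need not be power compact. Perron--Frobenius for $S$ provides a quasi-interior $x_{0}$ with $Sx_{0}=rx_{0}$, a strictly positive $\psi$ with $S^{*}\psi=r\psi$, and the simple-pole expansion of $R(\cdot,S)$ at $r$ with residue the rank-one projection $\langle\psi,\cdot\rangle\,x_{0}/\langle\psi,x_{0}\rangle$. By the localization lemma it suffices to show $Dx_{0}=0$, so assume $Dx_{0}\ne 0$ and seek a contradiction. For $\lambda>r$ the Neumann series give $0\le R(\lambda,S)\le R(\lambda,T)$, while $R(\lambda,S)x_{0}=(\lambda-r)^{-1}x_{0}$ and $R(\lambda,S^{*})\psi=(\lambda-r)^{-1}\psi$. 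Substituting $R(\lambda,S)x_{0}$ into the resolvent identity $R(\lambda,T)=R(\lambda,S)+R(\lambda,T)\,D\,R(\lambda,S)$, pairing with $\psi$, and discarding a nonnegative term via $R(\lambda,T)\ge R(\lambda,S)$ produces
\[
\langle\psi,R(\lambda,T)x_{0}\rangle\;\ge\;\frac{\langle\psi,x_{0}\rangle}{\lambda-r}+\frac{\langle\psi,Dx_{0}\rangle}{(\lambda-r)^{2}},
\]
so, since $\psi$ is strictly positive and $Dx_{0}\ne 0$, the resolvent of $T$ has a singularity of order at least two at $r$.

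The step I expect to be the main obstacle is converting this second-order singularity into a contradiction. If $r$ is known to be a \emph{pole} of $R(\cdot,T)$, the argument closes at once: Schaefer's theorem asserts that the spectral radius of an irreducible positive operator, when a pole, is automatically a \emph{simple} pole, which is incompatible with the order-two lower bound above; hence $\langle\psi,Dx_{0}\rangle=0$, so $Dx_{0}=0$ and $D=0$. The difficulty is that here $T$ carries no compactness, so a priori $r$ could lie in the essential spectrum of $T$ and fail to be a pole---and a scalar blow-up of order two, being consistent with an essential singularity, then yields nothing. Equivalently, the genuine content of this case is a strict-monotonicity statement: for $S$ irreducible with $r(S)$ a pole and $0\le S\le T$ with $S\ne T$, one must show $r(S)<r(T)$ outright. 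Securing it requires transferring enough spectral regularity to $T$ at $r$ to guarantee that $r$ is isolated in $\sigma(T)$ and a pole, and this is where I would have to exploit the reflexivity of $L^{p}(\mu)$ together with the vanishing essential spectral radius of the power-compact $S$; this localization of the spectrum of $T$ at $r$ is the crux on which the whole comparison turns.
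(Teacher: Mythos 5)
Your first case ($T$ irreducible and power compact) is correct, and it is essentially the route the paper itself takes for that half: domination passes compactness down to $S$ (Dodds--Fremlin on $L^p$, or the Aliprantis--Burkinshaw cube theorem in general), de Pagter's theorem gives $r>0$, and pairing the positive eigenvector of $S$ at $r$ against the strictly positive eigenfunctional of $T^*$ at $r$ is exactly the mechanism of Lemma~\ref{gc0}\,\eqref{gc0i1} together with the localization Lemma~\ref{copa0}\,\eqref{copa0i2}. The second case, however, has a genuine gap, which you have in fact named yourself: when only $S$ is power compact, nothing transfers any spectral regularity upward to $T$. Domination gives compactness only downward, so $r$ may a priori be a non-isolated point of $\sigma(T)$ or an essential singularity of $R(\cdot,T)$; your order-two blow-up of $\langle\psi,R(\lambda,T)x_0\rangle$ then contradicts nothing, since Schaefer's simple-pole theorem (Lemma~\ref{eig23}) only bites once $r$ is known to be a pole. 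Your closing reformulation---that one must prove strict monotonicity $r(S)<r(T)$ for $S\neq T$---is not a step toward the proof; it is a restatement of the theorem. The proposal therefore stops exactly where the real work begins.

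The idea you are missing (Theorem~\ref{mainc1} of the paper) is to avoid localizing $\sigma(T)$ near $r$ altogether and instead manufacture compactness on the dominating side. Normalize $\|T\|<1$ and set $\widetilde{T}=\sum_1^\infty T^m$, $\widetilde{S}=\sum_1^\infty S^m$. Then $\widetilde{S}S^k\widetilde{S}\leq\widetilde{T}S^k\widetilde{T}$ are \emph{both} compact, because $S^k$ is compact and the compact operators form an ideal; $\widetilde{T}S^k\widetilde{T}$ is irreducible (indeed expanding), because $\widetilde{T}$ is expanding by Lemma~\ref{chi1} applied to $T$, which inherits irreducibility from $S$ as you observed; and the spectral radii match, since the spectral mapping theorem gives $r(\widetilde{T}T^k\widetilde{T})=r(\widetilde{S}S^k\widetilde{S})$ and the sandwich $\widetilde{S}S^k\widetilde{S}\leq\widetilde{T}S^k\widetilde{T}\leq\widetilde{T}T^k\widetilde{T}$ squeezes $r(\widetilde{T}S^k\widetilde{T})$. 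Now the compact comparison (your Case 1, i.e.\ Lemma~\ref{omco}) applies to this auxiliary pair and yields $\widetilde{T}S^k\widetilde{T}=\widetilde{S}S^k\widetilde{S}$, which is unwound to $T=S$ using strict positivity of $\widetilde{T}$ and of $S^k$. An alternative fix, also in the paper (Theorem~\ref{coo}\,\eqref{cooi2}), composes with the rank-one spectral projection $P$ of $S$ at $r$: then $PS\leq PT$ are compact, $r(PT)=r(PS)$ follows from the Ces\`aro convergence of Lemma~\ref{kkk} and the spectral-radius continuity of Lemma~\ref{csrx}, and Lemma~\ref{omco} finishes. Either way, the device is to build comparable \emph{compact} operators out of $S$ and $T$, not to force $r$ to be a pole of $R(\cdot,T)$---which, with the tools at hand, cannot be done directly.
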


In Section~\ref{sec3} of this paper, we generalize some known facts about positive eigenvectors of irreducible operators and their adjoint operators.
In Sections~\ref{sec5} and \ref{lastsss}, we provide several extensions of Theorem~\ref{cpft} to positive operators on arbitrary Banach lattices. In particular, we show in Corollary~\ref{ckm11} that Theorem~\ref{crad0} remains valid for general Banach lattices.
Moreover, we prove in Theorem~\ref{coo} that power compactness condition may be replaced with the (weaker) condition that the spectral radius is a pole of the resolvent.
\medskip

We write $\sigma(T)$ for the spectrum of $T$, $r(T)$ for the spectral radius of $T$, and $R(\cdot,T)$ for the resolvent of $T$. 
For $x\in X$, we write $x_+$ and $x_-$ for the positive and negative parts of $x$, respectively.
Recall that a positive operator is called \textbf{strongly expanding} (respectively, \textbf{expanding}) if it sends non-zero positive vectors to quasi-interior points (respectively, weak (order) units). 
A positive operator on $X$ is said to be \textbf{strictly} \textbf{positive} if it does not vanish on any non-zero positive vectors. It can be easily verified that if $T>0$ is $\sigma$-order continuous, then so is $\sum_1^\infty\frac{T^n}{\lambda^{n}}$ for each $\lambda>r(T)$. 
For background and notations on Banach spaces and Banach lattices, we refer the reader to \cite{conw,abr,ali,scha3}.\par

We will use the following well known properties of irreducible operators.

\begin{lemma}\label{chi1}Fix $\lambda>r(T)$. The following statements are equivalent:
\begin{enumerate}
 \item\label{chili1}$T$ is ideal irreducible;
\item\label{chili2}$\sum_1^\infty \frac{T^n}{\lambda^n}$ is strongly expanding;
\item\label{chili3}$\sum_1^\infty \frac{T^{n*}}{\lambda^n}x^*$ is strictly positive for any $x^*>0$.
\end{enumerate}
\end{lemma}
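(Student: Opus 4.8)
The plan is to set $S:=\sum_{n=1}^\infty \lambda^{-n}T^{n}$ (which converges in operator norm because $\lambda>r(T)$, since $S=\lambda R(\lambda,T)-I$) and to prove the equivalences in the form (\ref{chili1})$\Rightarrow$(\ref{chili2}), (\ref{chili2})$\Leftrightarrow$(\ref{chili3}), (\ref{chili2})$\Rightarrow$(\ref{chili1}). Everything hinges on two elementary facts about $S$. First, re-indexing the defining series gives, for every $x\ge 0$, the identity $T(Sx)=\lambda Sx-Tx$, and hence the domination $0\le T(Sx)\le \lambda Sx$. Second, by definition of the adjoint, $\langle S^{*}x^{*},x\rangle=\langle x^{*},Sx\rangle$ for all $x\in X$ and $x^{*}\in X^{*}$.

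For (\ref{chili1})$\Rightarrow$(\ref{chili2}) I would first record that an ideal irreducible non-zero positive operator is automatically strictly positive: its null ideal $N=\{y:T|y|=0\}$ is a closed $T$-invariant ideal, so $N=\{0\}$ (it cannot be $X$ as $T\neq 0$). Thus for any $0<x$ we have $Tx\neq 0$, whence $Sx\ge \lambda^{-1}Tx>0$. To see that $Sx$ is a quasi-interior point, consider the closed ideal $J$ generated by $Sx$, i.e.\ the closure of the principal ideal $I_{Sx}=\{z:|z|\le cSx\text{ for some }c\ge 0\}$. The domination $T(Sx)\le \lambda Sx$ together with positivity of $T$ shows that $T$ maps $I_{Sx}$ into itself (if $|z|\le cSx$ then $|Tz|\le cT(Sx)\le c\lambda Sx$), and continuity of $T$ then gives $T(J)\subseteq J$. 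Hence $J$ is a non-zero closed $T$-invariant ideal, so $J=X$ by irreducibility; that is, $Sx$ is quasi-interior and $S$ is strongly expanding.

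The equivalence (\ref{chili2})$\Leftrightarrow$(\ref{chili3}) is a matter of duality and quantifier bookkeeping. I would invoke the standard characterization that a positive vector $u$ is quasi-interior if and only if $\langle x^{*},u\rangle>0$ for every $0<x^{*}\in X^{*}$, together with the fact that a positive functional is strictly positive precisely when it does not vanish on any $0<x$. Using $\langle S^{*}x^{*},x\rangle=\langle x^{*},Sx\rangle$, the assertion ``$Sx$ is quasi-interior for every $0<x$'' becomes ``$\langle S^{*}x^{*},x\rangle>0$ for all $0<x$ and all $0<x^{*}$'', which is exactly ``$S^{*}x^{*}$ is strictly positive for every $0<x^{*}$''.

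Finally, for (\ref{chili2})$\Rightarrow$(\ref{chili1}), let $J$ be a non-zero closed $T$-invariant ideal and pick $0<x\in J$. Then $T^{n}x\in J$ for all $n$, and since $J$ is closed the norm-convergent sum $Sx$ lies in $J$; as $Sx$ is a quasi-interior point, the closed ideal it generates, namely all of $X$, is contained in $J$, so $J=X$. The main obstacle is the step (\ref{chili1})$\Rightarrow$(\ref{chili2}): the crux is verifying that the closed ideal generated by $Sx$ is $T$-invariant, which is precisely where the domination $T(Sx)\le\lambda Sx$ is used; the remaining implications are either routine or pure duality, modulo citing the quasi-interior/strictly-positive characterization.
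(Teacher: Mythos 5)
Your proposal is correct; every step checks out. The identity $TS=\lambda S-T$ (hence $0\le T(Sx)\le \lambda Sx$ for $x\ge 0$) is right, the null-ideal argument correctly yields strict positivity of an ideal irreducible $T\neq 0$, the domination indeed makes the closed ideal generated by $Sx$ invariant under $T$, and in the converse direction the norm-convergent series $Sx$ does lie in any closed invariant ideal containing $x$. Comparing with the paper: your handling of (2)$\Leftrightarrow$(3) is exactly the paper's proof, namely the duality identity $x^*(Sx)=(S^*x^*)(x)$ combined with the characterization of quasi-interior points as those $y>0$ satisfying $x^*(y)>0$ for all $x^*>0$. For (1)$\Leftrightarrow$(2), however, the paper gives no argument at all; it simply cites Schaefer's \emph{Topological Vector Spaces}, p.~317. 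You supply the proof from first principles, and it is essentially the standard argument behind that citation: irreducibility kills the null ideal, so $Sx>0$; the domination $T(Sx)\le\lambda Sx$ makes the principal ideal of $Sx$ (and then its closure) $T$-invariant, forcing $Sx$ to be quasi-interior; conversely, a non-zero closed invariant ideal absorbs $Sx$ and hence equals $X$. What your version buys is self-containedness, at the cost of length; the paper's citation is more economical but leaves the reader to chase the reference. One small presentational point: your strict positivity claim for irreducible operators reappears later in the paper as Lemma~\ref{faci1}\,\eqref{faci1i3}, but there it is \emph{derived} from the present lemma, so your independent null-ideal argument is the right move to avoid circularity.
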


\begin{proof}The equivalence of \eqref{chili1} and \eqref{chili2} is shown in \cite{scha4}, p.~317. For $\eqref{chili2}\Leftrightarrow\eqref{chili3}$, 
simply note that $x^*\big(\sum_1^\infty \frac{T^n}{\lambda^n}x\big)=\big(\sum_1^\infty \frac{T^{n*}}{\lambda^n}x^*\big)(x)$, and that $y>0$ is a quasi-interior point if and only if $x^*(y)>0$ for any $x^*>0$.\end{proof}

\begin{lemma}\label{faci1}Let $T$ be ideal irreducible.
\begin{enumerate}
 \item\label{faci1i1} If $Tx=\lambda x$ for some $x>0$ and $\lambda\in\mathbb{R}$ then $x$ is a quasi-interior point and $\lambda>0$;\par
 \item\label{faci1i2} If $T^*x^*=\lambda x^*$ for some $x^*>0$ and $\lambda\in\mathbb{R}$ then $x^*$ is strictly positive and $\lambda>0$;\par
 \item\label{faci1i3} $T$ is strictly positive.
\end{enumerate}
\end{lemma}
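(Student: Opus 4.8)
The plan is to deduce all three parts from Lemma~\ref{chi1} by feeding it the relevant eigenvector. Fix once and for all a scalar $\mu>r(T)$ and set $S=\sum_{n=1}^\infty \frac{T^n}{\mu^n}$, a bounded positive operator since $r(T)/\mu<1$. By Lemma~\ref{chi1}, ideal irreducibility of $T$ makes $S$ strongly expanding (condition~\eqref{chili2}) and makes $\sum_{n=1}^\infty \frac{T^{n*}}{\mu^n}x^*$ strictly positive for every $x^*>0$ (condition~\eqref{chili3}). Two elementary observations will be used throughout: first, because $T\geq 0$ acts on a positive eigenvector, the relation $Tx=\lambda x\geq 0$ with $x>0$ (resp. $T^*x^*=\lambda x^*\geq 0$ with $x^*>0$) forces $\lambda\geq 0$; second, an eigenvalue lies in the spectrum, so $\lambda\leq r(T)<\mu$, whence the geometric series $\sum_{n=1}^\infty(\lambda/\mu)^n=\frac{\lambda}{\mu-\lambda}$ converges.

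For part~\eqref{faci1i1}, I would iterate $Tx=\lambda x$ to get $T^nx=\lambda^n x$, so that $S$ acts on $x$ as a scalar:
$$Sx=\sum_{n=1}^\infty\frac{\lambda^n}{\mu^n}\,x=\frac{\lambda}{\mu-\lambda}\,x.$$
Since $x>0$ and $S$ is strongly expanding, $Sx$ is a quasi-interior point, in particular $Sx\neq 0$; reading off the displayed identity, this is impossible unless $\lambda>0$. With $\lambda>0$ the scalar $\frac{\mu-\lambda}{\lambda}$ is positive, so $x=\frac{\mu-\lambda}{\lambda}Sx$ is a positive multiple of a quasi-interior point and hence itself quasi-interior. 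Part~\eqref{faci1i2} is the verbatim dual argument: iterating gives $T^{n*}x^*=\lambda^n x^*$, so $\sum_{n=1}^\infty\frac{T^{n*}}{\mu^n}x^*=\frac{\lambda}{\mu-\lambda}x^*$, which is strictly positive by condition~\eqref{chili3}; this rules out $\lambda=0$ and exhibits $x^*$ as a positive multiple of a strictly positive functional.

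Finally, part~\eqref{faci1i3} would follow from the same operator $S$: if $Tx=0$ for some $x>0$, then $T^nx=0$ for all $n\geq 1$, so $Sx=0$, contradicting that the strongly expanding $S$ must send $x>0$ to a nonzero quasi-interior point; hence $T$ vanishes on no nonzero positive vector, i.e.\ $T$ is strictly positive. The argument is essentially a direct deduction from Lemma~\ref{chi1}, so there is no deep obstacle; the only points demanding care are the bookkeeping that guarantees $0\leq\lambda<\mu$ (so that the geometric series converges and is correctly evaluated, using $r(T^*)=r(T)$ in the dual case) and the standard facts that a strictly positive scalar multiple of a quasi-interior point is again quasi-interior and of a strictly positive functional is again strictly positive.
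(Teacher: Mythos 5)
Your proposal is correct and follows essentially the same route as the paper: both parts \eqref{faci1i1} and \eqref{faci1i2} are obtained by evaluating the Neumann series $\sum_1^\infty T^n/\mu^n$ (resp.\ its adjoint) on the eigenvector and invoking Lemma~\ref{chi1}, with the convergence of $\sum_1^\infty(\lambda/\mu)^n$ justified exactly as in the paper. Your part \eqref{faci1i3} argues directly with the series where the paper simply cites part \eqref{faci1i1} (the case $\lambda=0$), but these are the same one-line contradiction.
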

\begin{proof}\eqref{faci1i1} It is clear that $\lambda\geq 0$. Now pick any $\delta>r(T)$. Then by Lemma~\ref{chi1}\,\eqref{chili2}, $\sum_1^\infty \frac{T^n}{\delta^n}x=\left(\sum_1^\infty\frac{\lambda^n}{\delta^n}\right)x$ 
is a quasi-interior point. Hence $\lambda>0$ and $x$ is a quasi-interior point. \eqref{faci1i2} can be proved similarly by using Lemma~\ref{chi1}\,\eqref{chili3}. \eqref{faci1i3} follows immediately from \eqref{faci1i1}.
\end{proof}

\begin{lemma}\label{chb2}Let $T>0$ be $\sigma$-order continuous. Fix $\lambda>r(T)$. The following two statements are equivalent:
\begin{enumerate}
 \item\label{chb2i1}  $T$ is band irreducible;
\item\label{chb2i2}  $\sum_1^\infty \frac{T^n}{\lambda^n}$ is expanding.\\
Any of these two implies the following:
\item\label{chb2i3} $\sum_1^\infty \frac{T^{n*}}{\lambda^n}x^*$ is strictly positive for any $\sigma$-order continuous $x^*>0$.
\end{enumerate}
\end{lemma}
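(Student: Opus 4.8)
The plan is to work throughout with the positive operator $S:=\sum_{1}^{\infty}\lambda^{-n}T^{n}$, which is $\sigma$-order continuous by the observation recorded in the Introduction and which satisfies the algebraic identity $\lambda S=T+TS=T(I+S)$. Two preliminary facts would anchor everything. First, a band $B$ is $T$-invariant if and only if it is $S$-invariant: if $TB\subseteq B$ then each partial sum $\sum_{1}^{N}\lambda^{-n}T^{n}y$ lies in $B$ for $y\in B$, and since bands are norm closed and these sums converge in norm to $Sy$, we get $Sy\in B$ (for the converse one uses $0\le Ty\le\lambda Sy$ and solidity of $B$). In particular $T$ is band irreducible exactly when $S$ is. Second, in any Banach lattice one has the principal-band approximation $\sup_{n}(z\wedge nv)=z$ for every $v\ge 0$ and every $0\le z\in\{v\}^{dd}$; this follows from the elementary lattice inequality $(z-nv)_{+}\wedge v\le n^{-1}z$ together with the Archimedean property, and crucially needs \emph{no} order-completeness assumption.

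To prove \eqref{chb2i2}$\Rightarrow$\eqref{chb2i1}, I would take a non-zero $T$-invariant band $B$ and a vector $0<y\in B$. By the correspondence above $Sy\in B$, and since $S$ is expanding, $Sy$ is a weak order unit; as the only band containing a weak order unit is $X$ itself, $B=X$. Hence $T$ has no non-zero proper invariant band.

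The substantive direction is \eqref{chb2i1}$\Rightarrow$\eqref{chb2i2}. The key claim is that whenever $v\ge 0$ satisfies $Tv\le\mu v$ for some scalar $\mu\ge 0$, the band $\{v\}^{dd}$ is $T$-invariant: for $0\le z\in\{v\}^{dd}$ one has $z\wedge nv\uparrow z$, each $T(z\wedge nv)$ obeys $0\le T(z\wedge nv)\le n\mu v$ and so lies in $\{v\}^{dd}$, and $\sigma$-order continuity of $T$ gives $T(z\wedge nv)\uparrow Tz$, whence $Tz\in\{v\}^{dd}$ since bands are order closed. Applying this with $\mu=0$ shows that if $Tx=0$ for some $x>0$ then $T$ vanishes on $\{x\}^{dd}=X$, contradicting $T>0$; thus $T$ is strictly positive and $u:=Sx>0$ for every $0<x$. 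Applying it with $v=u$ and $\mu=\lambda$ (legitimate because $\lambda u=Tx+Tu$ gives $Tu\le\lambda u$) shows that $\{u\}^{dd}$ is a non-zero $T$-invariant band, so band irreducibility forces $\{u\}^{dd}=X$; that is, $u=Sx$ is a weak order unit and $S$ is expanding.

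Finally, for \eqref{chb2i3} I would assume \eqref{chb2i2} and show that $x^{*}(Sy)>0$ for every $y>0$ and every $\sigma$-order continuous $x^{*}>0$; writing $u=Sy$, which is a weak order unit, this reduces to $x^{*}(u)>0$. Choosing $w\ge 0$ with $x^{*}(w)>0$, the principal-band approximation (valid here since $\{u\}^{dd}=X\ni w$) yields $w\wedge nu\uparrow w$, equivalently $(w-nu)_{+}\downarrow 0$ in order, the disjoint remainder vanishing precisely because $\{u\}^{d}=\{0\}$. Since $x^{*}$ is $\sigma$-order continuous, $x^{*}\big((w-nu)_{+}\big)\to 0$, so $x^{*}(w\wedge nu)\uparrow x^{*}(w)>0$, and from $x^{*}(w\wedge nu)\le n\,x^{*}(u)$ we conclude $x^{*}(u)>0$. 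I expect the main obstacle to lie not in this last computation but in the interface between band operations and $\sigma$-order continuity in the direction \eqref{chb2i1}$\Rightarrow$\eqref{chb2i2}: one must guarantee that the objects produced are genuine (order-closed) bands and that \emph{sequential} order continuity suffices to push $T$ through the suprema $z\wedge nv\uparrow z$. Reducing everything to principal bands generated by a single element $v$ is exactly what keeps the limits sequential, so that $\sigma$-order continuity is enough; the one extra point needing care is the strict positivity of $T$, which, as indicated above, itself falls out of the same invariance argument.
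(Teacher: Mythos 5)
Your proof is correct, and it follows the route the paper itself intends: the equivalence \eqref{chb2i1}$\Leftrightarrow$\eqref{chb2i2} via $T$-invariance of principal bands $\{Sx\}^{dd}$ (using $\sigma$-order continuity to push $T$ through $z\wedge nv\uparrow z$) is exactly the band-theoretic adaptation of the ideal-irreducible argument that the paper delegates to Schaefer (p.~317), and your proof of \eqref{chb2i3} is precisely the paper's one-line remark --- that a weak unit is detected by every $\sigma$-order continuous positive functional --- spelled out in full. The only difference is that your write-up is self-contained where the paper relies on a citation and an analogy; all the supporting facts you invoke (norm-closedness of bands, the Archimedean principal-band approximation, strict positivity of a band irreducible $\sigma$-order continuous operator) are used correctly.
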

\begin{proof}$\eqref{chb2i1}\Leftrightarrow\eqref{chb2i2}$ can be proved similarly as for ideal irreducible operators; see \cite{scha4}, p.~317. For the last assertion, simply note that if $y>0$ is a weak unit, 
then $x^*(y)>0$ for any $\sigma$-order continuous $x^*>0$.\end{proof}
\begin{lemma}\label{facb2}Let $T$ be band irreducible and $\sigma$-order continuous.
\begin{enumerate}
\item\label{facb2i1} If $Tx=\lambda x$ for some $x>0$ and $\lambda\in\mathbb{R}$, then $x$ is a weak unit and $\lambda>0$;
\item\label{facb2i2} If $T^*x^*=\lambda x^*$ for some $\sigma$-order continuous $x^*>0$ and $\lambda\in\mathbb{R}$, then $x^*$ is strictly positive and $\lambda>0$;
\item\label{facb2i3} $T$ is strictly positive.
\end{enumerate}
\end{lemma}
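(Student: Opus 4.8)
The plan is to mirror the proof of Lemma~\ref{faci1}, replacing the ideal-irreducible tools of Lemma~\ref{chi1} with their band-irreducible counterparts in Lemma~\ref{chb2}, and replacing ``quasi-interior point'' by ``weak unit'' throughout. I would begin by fixing $\delta>r(T)$; since $T>0$ is $\sigma$-order continuous, so is $\sum_1^\infty\frac{T^n}{\delta^n}$ (as noted in the preliminaries), and by Lemma~\ref{chb2}\,\eqref{chb2i2} this operator is expanding, i.e.\ it carries non-zero positive vectors to weak units.

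For \eqref{facb2i1}, I would first observe that positivity of $T$ forces $\lambda x=Tx\geq 0$, so $\lambda\geq 0$. Since $\lambda$ is an eigenvalue, $0\leq\lambda\leq r(T)<\delta$, so the scalar series $c:=\sum_1^\infty\frac{\lambda^n}{\delta^n}$ converges and $\sum_1^\infty\frac{T^n}{\delta^n}x=c\,x$. As $x>0$ and $\sum_1^\infty\frac{T^n}{\delta^n}$ is expanding, $c\,x$ is a weak unit; in particular $c\,x\neq 0$, which forces $c>0$ and hence $\lambda>0$. Dividing by $c$ then shows that $x$ itself is a weak unit.

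For \eqref{facb2i2} I would argue identically, but invoke Lemma~\ref{chb2}\,\eqref{chb2i3} instead: the $\sigma$-order continuity hypothesis on $x^*$ is exactly what allows me to conclude that $\sum_1^\infty\frac{T^{n*}}{\delta^n}x^*=c\,x^*$ is strictly positive. Again $c\,x^*\neq 0$ forces $\lambda>0$, and dividing by $c>0$ gives that $x^*$ is strictly positive. Finally, \eqref{facb2i3} is immediate from \eqref{facb2i1}: if $Tx=0$ for some $x>0$, then $x$ would be a positive eigenvector with eigenvalue $0$, contradicting the conclusion $\lambda>0$ of \eqref{facb2i1}; hence $T$ vanishes on no non-zero positive vector.

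The argument is routine once the parallel with Lemma~\ref{faci1} is set up, so I expect no serious obstacle; the one point demanding care is the bookkeeping of the $\sigma$-order continuity hypotheses. Unlike Lemma~\ref{chi1}\,\eqref{chili3}, which holds for every $x^*>0$, the band-irreducible statement Lemma~\ref{chb2}\,\eqref{chb2i3} only guarantees strict positivity of $\sum_1^\infty\frac{T^{n*}}{\delta^n}x^*$ when $x^*$ is $\sigma$-order continuous, which is precisely why \eqref{facb2i2} must carry that assumption. I would make sure this restriction is respected whenever Lemma~\ref{chb2} is applied.
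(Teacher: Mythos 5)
Your proof is correct and is exactly the argument the paper intends: Lemma~\ref{facb2} is stated without proof precisely because it follows by mirroring the proof of Lemma~\ref{faci1}, substituting Lemma~\ref{chb2} for Lemma~\ref{chi1} and ``weak unit'' for ``quasi-interior point,'' which is what you do. Your closing observation—that the $\sigma$-order continuity hypothesis on $x^*$ in \eqref{facb2i2} is forced by the restricted form of Lemma~\ref{chb2}\,\eqref{chb2i3}—is also the right bookkeeping point.
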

The following generalization of Krein-Rutman theorem will be used; see Proposition~4, \cite{scha2}.
\begin{lemma}\label{seig}If $r(T)$ is a pole of $R(\cdot,T)$, then the leading coefficient of the Laurent expansion of $R(\cdot,T)$ at $r(T)$ is positive, and $T$ as well as $T^*$ has a positive eigenvector for $r(T)$.
\end{lemma}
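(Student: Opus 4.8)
The plan is to read off everything from the Laurent expansion of the resolvent, exploiting that $R(\lambda,T)$ is a positive operator for real $\lambda>r(T)$. Write $r=r(T)$ and let $k\geq 1$ be the order of the pole, so that on a punctured disc $0<|\lambda-r|<\rho$ we have a norm-convergent expansion $R(\lambda,T)=\sum_{n\geq -k}(\lambda-r)^nA_n$ with leading coefficient $A_{-k}\neq 0$. Since $r$ is the spectral radius, the whole ray $(r,+\infty)$, and in particular the interval $(r,r+\rho)$, lies in the resolvent set.

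First I would prove $A_{-k}>0$. For real $\lambda>r$ the Neumann series $R(\lambda,T)=\sum_{n=0}^\infty \lambda^{-n-1}T^n$ converges and is positive, because $T\geq 0$ and $\lambda>0$. Multiplying the Laurent series by $(\lambda-r)^k$ gives $(\lambda-r)^kR(\lambda,T)=A_{-k}+(\lambda-r)A_{-k+1}+\cdots$, which converges in operator norm to $A_{-k}$ as $\lambda\downarrow r$. For $r<\lambda<r+\rho$ each operator $(\lambda-r)^kR(\lambda,T)$ is positive, and since the positive cone of operators is norm-closed the limit $A_{-k}$ is positive; being nonzero it satisfies $A_{-k}>0$. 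This is the first assertion.

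Next I would extract the eigenvector relations. Substituting the expansion into the identities $(\lambda-T)R(\lambda,T)=R(\lambda,T)(\lambda-T)=I$ and writing $\lambda-T=(\lambda-r)+(r-T)$, comparison of the coefficients of $(\lambda-r)^{-k}$ (using that no term below the power $-k$ occurs) yields $(r-T)A_{-k}=0$ and $A_{-k}(r-T)=0$. A nonzero positive operator cannot annihilate the whole positive cone, so there is $x\geq 0$ with $y:=A_{-k}x>0$; then $Ty=ry$, so $y$ is a positive eigenvector of $T$ for $r$. For the adjoint I would use $R(\lambda,T^*)=R(\lambda,T)^*$, whence $r$ is a pole of $R(\cdot,T^*)$ of the same order $k$ with leading coefficient $A_{-k}^*$, which is again positive and nonzero; taking adjoints in $A_{-k}(r-T)=0$ gives $(r-T^*)A_{-k}^*=0$, and any $x^*\geq 0$ with $A_{-k}^*x^*>0$ yields a positive eigenvector of $T^*$ for $r$.

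The step requiring the most care is the identification of $A_{-k}$ as the operator-norm limit of $(\lambda-r)^kR(\lambda,T)$ along real $\lambda\downarrow r$: this needs the Laurent series to converge in norm on a genuine punctured neighbourhood of $r$, which holds precisely because a pole is an isolated point of the spectrum, together with the fact that the approach segment $(r,r+\rho)$ avoids $\sigma(T)$ because $r=r(T)$ is the spectral radius. Everything else is routine once the positivity of the resolvent and the norm-closedness of the cone are in hand.
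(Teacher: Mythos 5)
Your proof is correct and complete: positivity of the leading coefficient $A_{-k}$ follows from the norm convergence $(\lambda-r)^{k}R(\lambda,T)\to A_{-k}$ as $\lambda\downarrow r$ together with positivity of the resolvent on $(r,\infty)$ and norm-closedness of the cone, and the eigenvectors for $T$ and $T^*$ come from $(r-T)A_{-k}=0=A_{-k}(r-T)$ and its adjoint. The paper does not prove this lemma itself but quotes it as Proposition~4 of \cite{scha2}, and your argument is exactly the classical one behind that citation, so there is nothing to add.
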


\section{Positive Eigenvectors of Irreducible Operators}\label{sec3}
In this section, we establish some basic lemmas. We will look at positive eigenvectors of irreducible operators. In particular, we are interested in positive eigenfunctionals of their adjoint operators.\par
Recall that we always assume $T>0$.
The following theorem is well known and can be found in Theorem~5.2, p.~329, \cite{scha3} for ideal irreducible operators and in Theorem 4.12, \cite{grob2} for band irreducible operators.
\begin{theorem}\label{eig02}Suppose $Tx_0=r(T) x_0$ and $T^*x_0^*=r(T) x_0^*$ for some $x_0>0$ and some $x_0^*>0$.
\begin{enumerate}
 \item If $T$ is ideal irreducible, then $\ker(r(T)-T)=\mathrm{Span}\{x_0\}$;
\item If $T$ is  band irreducible and $\sigma$-order continuous, and $x_0^*$ is strictly positive, then $\ker(r(T)-T)=\mathrm{Span}\{x_0\}$.
\end{enumerate}
\end{theorem}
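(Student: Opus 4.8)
The plan is to prove both parts simultaneously, since the only difference between the ideal irreducible and the band irreducible case is whether a positive eigenvector is quasi-interior or merely a weak unit, and it is only the weak-unit property that the argument needs. Write $r=r(T)$. The inclusion $\mathrm{Span}\{x_0\}\subseteq\ker(r-T)$ is immediate, so the whole content is to show that an arbitrary real $y$ with $Ty=ry$ lies in $\mathrm{Span}\{x_0\}$. I would split this into a sign-reduction step and a comparison step.

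For the sign reduction, I first show that every eigenvector has constant sign. Given $Ty=ry$, decompose $y=y_+-y_-$. Since $T\ge0$ and $r\ge0$, one has $Ty_+\ge(Ty_+-Ty_-)_+=(Ty)_+=ry_+$, and likewise $Ty_-\ge ry_-$. Pairing $Ty_+-ry_+\ge0$ with $x_0^*$ and using $T^*x_0^*=rx_0^*$ gives $x_0^*(Ty_+-ry_+)=0$; as $x_0^*$ is strictly positive (automatic from Lemma~\ref{faci1} in the ideal case, assumed in the band case), this forces $Ty_+=ry_+$, and similarly $Ty_-=ry_-$. Thus $y_+$ and $y_-$ are themselves positive eigenvectors. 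If both were non-zero, each would be a weak unit, being quasi-interior in the ideal case by Lemma~\ref{faci1}(1) and a weak unit directly in the band case by Lemma~\ref{facb2}(1); this contradicts $y_+\wedge y_-=0$. Hence one of $y_\pm$ vanishes and $y$ has constant sign.

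For the comparison step, applying the previous conclusion to the eigenvector $x_0-\lambda y$ shows that for every scalar $\lambda$ either $\lambda y\le x_0$ or $x_0\le\lambda y$. Assuming $y\ge0$ with $y\ne0$ (replace $y$ by $-y$ if necessary), set $\lambda_0=\sup\{\lambda\ge0:x_0-\lambda y\ge0\}$. This set is an interval containing $0$; it is bounded, since $0\le y\le x_0/\lambda$ would force $\|y\|\le\|x_0\|/\lambda\to0$, so $\lambda_0<\infty$, and a symmetric argument gives $\lambda_0>0$. For $\lambda\uparrow\lambda_0$ the positive vectors $x_0-\lambda y$ converge in norm to $w:=x_0-\lambda_0 y$, so $w\ge0$ by closedness of the cone; for $\lambda\downarrow\lambda_0$ the constant-sign dichotomy gives $x_0-\lambda y\le0$, and passing to the limit gives $w\le0$. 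Hence $w=0$, i.e. $y=\lambda_0^{-1}x_0\in\mathrm{Span}\{x_0\}$, as required.

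I expect the crux to be the impossibility of two non-zero disjoint positive eigenvectors, as this is exactly where irreducibility is indispensable: for a reducible operator the eigenspace can be higher-dimensional, and it is precisely the weak-unit conclusion of Lemma~\ref{faci1} and Lemma~\ref{facb2} (itself coming from the expanding/strictly-positive characterisations of Lemma~\ref{chi1} and Lemma~\ref{chb2}) that rules this out. The other genuinely infinite-dimensional point is that one cannot dominate $y$ by a multiple of $x_0$ coordinatewise as in the matrix case; instead the supremum $\lambda_0$, together with norm-continuity of $\lambda\mapsto x_0-\lambda y$ and closedness of the positive cone, supplies the substitute for the finite-dimensional touching argument.
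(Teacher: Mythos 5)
Your proof is correct. All the key steps check out: $Ty_\pm\ge ry_\pm$ follows from positivity of $T$ and $r\ge 0$; strict positivity of $x_0^*$ (automatic in the ideal case by Lemma~\ref{faci1}\,\eqref{faci1i2}, hypothesized in the band case) then forces $y_\pm$ into the eigenspace; Lemma~\ref{faci1}\,\eqref{faci1i1}/Lemma~\ref{facb2}\,\eqref{facb2i1} make any nonzero positive eigenvector a weak unit, so disjointness of $y_+$ and $y_-$ kills one of them; and the touching argument with $\lambda_0=\sup\{\lambda\ge 0:x_0-\lambda y\ge 0\}$, using monotonicity of the lattice norm and closedness of the positive cone, is sound. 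Note, however, that the paper itself contains no proof of this theorem: it is stated as well known, with references to Theorem~5.2, p.~329 of \cite{scha3} (ideal case) and Theorem~4.12 of \cite{grob2} (band case), so the real comparison is with those classical arguments, and yours is essentially the same --- sign reduction via the strictly positive eigenfunctional plus the weak-unit/disjointness contradiction. The only genuine variation is the finish: the classical route observes that the eigenspace is a totally ordered Archimedean sublattice and hence at most one-dimensional, whereas your explicit $\lambda_0$-scaling argument proves that fact from scratch, which makes your write-up self-contained at the cost of a few extra lines.
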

One can replace the spectral radius with a positive eigenvalue as follows.
\begin{lemma}\label{eig2}Suppose $Tx_0=\lambda x_0$ and $T^*x_0^*=\lambda x_0^*$ for some $x_0>0$ and $x_0^*>0$ and $\lambda\in\mathbb{R}$.
\begin{enumerate}
 \item\label{eig2i1} If $T$ is ideal irreducible, then $\ker(\lambda-T)=\mathrm{Span}\{x_0\}$;
\item\label{eig2i2} If $T$ is  band irreducible and $\sigma$-order continuous, and $x_0^*$ is strictly positive, then $\ker(\lambda-T)=\mathrm{Span}\{x_0\}$.
\item\label{eig2i3} If $T^*$ is band irreducible, then $\ker(\lambda-T)=\mathrm{Span}\{x_0\}$ and $\ker(\lambda-T^*)=\mathrm{Span}\{x_0^*\}$.
\end{enumerate}
\end{lemma}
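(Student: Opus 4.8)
The plan is to isolate one mechanism that drives all three parts and then feed it the appropriate irreducibility input. The heart of the matter is the observation that \emph{a strictly positive fixed functional splits eigenvectors}: if $\psi>0$ is strictly positive and $T^*\psi=\lambda\psi$, then every real $v$ with $Tv=\lambda v$ satisfies $Tv_+=\lambda v_+$ and $Tv_-=\lambda v_-$. Indeed, positivity of $T$ and the triangle inequality give $T|v|\ge|Tv|=\lambda|v|$, so $T|v|-\lambda|v|\ge 0$; pairing with $\psi$ and using $T^*\psi=\lambda\psi$ yields $\psi\big(T|v|-\lambda|v|\big)=0$, whence $T|v|=\lambda|v|$ because $\psi$ is strictly positive, and adding/subtracting this to $Tv=\lambda v$ gives the claim. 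I would record this as the one genuinely load-bearing step; everything else is bookkeeping.

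For \eqref{eig2i1} and \eqref{eig2i2} I then argue uniformly. In case \eqref{eig2i1} the functional $x_0^*$ is strictly positive by Lemma~\ref{faci1}\,\eqref{faci1i2}, and in case \eqref{eig2i2} it is strictly positive by hypothesis; in both cases take $\psi=x_0^*$ above. Given any real $v\in\ker(\lambda-T)$, the vectors $v_+,v_-$ are positive eigenvectors for $\lambda$, hence each nonzero one is a quasi-interior point (by Lemma~\ref{faci1}\,\eqref{faci1i1}) or a weak unit (by Lemma~\ref{facb2}\,\eqref{facb2i1}), so in either case a weak unit; since $v_+\wedge v_-=0$, at most one can be nonzero, and therefore $v\ge 0$ or $v\le 0$. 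Finally, for arbitrary $y\in\ker(\lambda-T)$ set $w=y-c\,x_0$ with $c=x_0^*(y)/x_0^*(x_0)$, the denominator being positive since $x_0>0$ and $x_0^*$ is strictly positive. Then $w\in\ker(\lambda-T)$ and $x_0^*(w)=0$; by the dichotomy just proved $w$ has one sign, and strict positivity of $x_0^*$ forces $w=0$, so $y=c\,x_0$. This mirrors the proof of Theorem~\ref{eig02} but never requires $\lambda=r(T)$.

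For \eqref{eig2i3} I would reduce to the previous parts by duality. Since the adjoint of a positive operator is order continuous (a standard fact), $T^*$ is band irreducible and $\sigma$-order continuous, and the two conclusions are obtained separately. For $\ker(\lambda-T^*)$ I apply part \eqref{eig2i2} to $T^*$ on $X^*$: here $x_0^*>0$ is the positive eigenvector, and the required positive eigenfunctional of $T^{**}$ is the canonical image $\widehat{x_0}\in X^{**}$, which satisfies $T^{**}\widehat{x_0}=\lambda\widehat{x_0}$, is $\sigma$-order continuous on $X^*$ (every element of $X$ acts order continuously on $X^*$), and is therefore strictly positive by Lemma~\ref{facb2}\,\eqref{facb2i2} applied to $T^*$; part \eqref{eig2i2} then yields $\ker(\lambda-T^*)=\mathrm{Span}\{x_0^*\}$. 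For $\ker(\lambda-T)$ I use that $T^*$ band irreducible forces $T$ to be \emph{ideal} irreducible: were $I$ a nonzero proper closed $T$-invariant ideal of $X$, its annihilator $I^{\circ}$ would be a nonzero proper $T^*$-invariant band of $X^*$, contradicting band irreducibility of $T^*$. With $T$ ideal irreducible, part \eqref{eig2i1} applies verbatim and gives $\ker(\lambda-T)=\mathrm{Span}\{x_0\}$.

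The main obstacle is concentrated in \eqref{eig2i3}, and it is conceptual rather than computational: the irreducibility hypothesis lives on $X^*$, whereas one of the desired conclusions concerns eigenvectors in $X$. The device that bridges this is the correspondence, via annihilators, between closed invariant ideals of $X$ and invariant bands of $X^*$, which is precisely what converts band irreducibility of $T^*$ into ideal irreducibility of $T$. I expect the essential verifications to be that $I^{\circ}$ is genuinely a band and is $T^*$-invariant, and that $\widehat{x_0}$ is a $\sigma$-order continuous, strictly positive eigenfunctional so that Lemma~\ref{facb2}\,\eqref{facb2i2} is applicable. In parts \eqref{eig2i1}--\eqref{eig2i2} the only substantive point is the opening mechanism, where strict positivity of $x_0^*$ is used crucially to upgrade $T|v|\ge\lambda|v|$ to an equality.
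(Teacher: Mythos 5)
Your proof is correct and takes essentially the same route as the paper: parts \eqref{eig2i1}--\eqref{eig2i2} via the Schaefer-type argument that the paper cites rather than reproduces (the strictly positive eigenfunctional splits any eigenvector into positive eigenvectors $v_\pm$, which are weak units, so one of them vanishes), and part \eqref{eig2i3} by converting band irreducibility of $T^*$ into ideal irreducibility of $T$ through annihilators, then applying \eqref{eig2i1} to $T$ and \eqref{eig2i2} to $T^*$ with the canonical image $\widehat{x_0}$ as eigenfunctional. The only cosmetic difference is in how strict positivity of $\widehat{x_0}$ is justified: the paper deduces it from quasi-interiority of $x_0$ (Lemma~\ref{faci1}\,\eqref{faci1i1}), whereas you use $\sigma$-order continuity of $\widehat{x_0}$ together with Lemma~\ref{facb2}\,\eqref{facb2i2} applied to $T^*$ --- both are valid one-line justifications.
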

\begin{proof}The proof of \eqref{eig2i1} and \eqref{eig2i2} is analogous to that of Theorem~5.2, p.~329, \cite{scha3}. It remains to prove \eqref{eig2i3}. 
Note that $T$ is ideal irreducible; see Exercise 16, p.~356, \cite{abr}. Applying \eqref{eig2i1} to $T$, we know $\ker(\lambda-T)=\mathrm{Span}\{x_0\}$. By Lemma~\ref{faci1}\eqref{faci1i1}, $x_0$ is quasi-interior. Thus it acts 
as a strictly positive functional on $X^*$ such that $T^{**}x_0=\lambda x_0^*$. Note that, being the adjoint of a positive operator, $T^*$ is order continuous. Hence applying \eqref{eig2i2} to $T^*$, 
we have $\ker(\lambda-T^*)=\mathrm{Span}\{x_0^*\}$.
\end{proof}
For \eqref{eig2i3}, we would like to remark that if $X$ is order continuous, then $T^*$ is band irreducible if and only if $T$ is ideal irreducible; see Exercise 16, p.~356, \cite{abr}.\par
Recall that if $\lambda$ is a simple pole of $R(\cdot,T)$ then $PX=\ker(\lambda-T)$ where $P$ is the spectral projection of $T$ for $\lambda$; see Corollary 6.40, p. 268, \cite{abr}.

\begin{lemma}\label{eig23}Suppose $r(T)$ is a pole of $R(\cdot,T)$ and $T$ satisfies one of the following:
\begin{enumerate}
 \item\label{eig23i1} $T$ is ideal irreducible;
\item\label{eig23i2} $T$ is band irreducible and $\sigma$-order continuous, and $T^*x_0^*=r(T)x_0^*$ for some strictly positive functional $x_0^*$\footnote{By Lemma~\ref{seig}, 
what we really require here is strict positivity of $x_0^*$, not its existence.}.
\end{enumerate}
Then $r(T)>0$ and it is a simple pole of $R(\cdot,T)$, $PX=\ker(r(T)-T)=\mathrm{Span}\{x_0\}$ for some $x_0>0$, and $P^*X^*=\ker(r(T)-T^*)=\mathrm{Span}\{x_0^*\}$,
where $P$ is the spectral projection of $T$ for $r(T)$.
\end{lemma}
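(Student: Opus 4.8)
The plan is to assemble positive eigendata for $T$ and $T^*$, read off $r(T)>0$ and the one-dimensionality of the eigenspace from the irreducibility lemmas, and then upgrade the pole to a \emph{simple} pole using positivity of the leading Laurent coefficient together with strict positivity of the adjoint eigenvector. Since $r(T)$ is a pole of $R(\cdot,T)$, Lemma~\ref{seig} supplies $x_0>0$ with $Tx_0=r(T)x_0$ and a positive eigenvector of $T^*$ at $r(T)$. In case~\eqref{eig23i1} this functional, say $x_0^*$, is automatically strictly positive by Lemma~\ref{faci1}\eqref{faci1i2}, while in case~\eqref{eig23i2} a strictly positive $x_0^*$ is given. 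I would then obtain $r(T)>0$ by applying Lemma~\ref{faci1}\eqref{faci1i1} to $x_0$ in case~\eqref{eig23i1}, and Lemma~\ref{facb2}\eqref{facb2i1} in case~\eqref{eig23i2}.

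With positive eigenvectors $x_0$ and $x_0^*$ of $T$ and $T^*$ at $r(T)$ in hand --- and $x_0^*$ strictly positive in case~\eqref{eig23i2} --- Theorem~\ref{eig02} applies verbatim and gives $\ker(r(T)-T)=\mathrm{Span}\{x_0\}$ in both cases.

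The crux is to show the pole is simple. Let $m$ be its order and $P$ the spectral projection of $T$ for $r(T)$. I would invoke the standard Laurent expansion of the resolvent at a pole, in which the coefficient of $(\lambda-r(T))^{-m}$ is $(T-r(T))^{m-1}P$; by Lemma~\ref{seig} this leading coefficient is a nonzero positive operator. Suppose $m\geq 2$. For every $x$ we have $x_0^*\big((T-r(T))^{m-1}Px\big)=\big((T^*-r(T))^{m-1}x_0^*\big)(Px)$, and the right-hand side vanishes because $(T^*-r(T))x_0^*=0$ and $m-1\geq 1$. Hence $x_0^*$ annihilates the range of the nonzero positive operator $(T-r(T))^{m-1}P$; but that operator sends some positive vector to a nonzero positive vector, on which the strictly positive functional $x_0^*$ cannot vanish. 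This contradiction forces $m=1$. I expect this pole-order step, and in particular the precise identification of the leading Laurent coefficient as $(T-r(T))^{m-1}P$, to be the main technical obstacle.

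Once $r(T)$ is a simple pole, the recalled fact (Corollary~6.40 of \cite{abr}) gives $PX=\ker(r(T)-T)=\mathrm{Span}\{x_0\}$, so $P$ has rank one. Passing to adjoints, $r(T)$ is a simple pole of $R(\cdot,T^*)=R(\cdot,T)^*$ with spectral projection $P^*$, whence the same fact yields $P^*X^*=\ker(r(T)-T^*)$; since $P^*$ has rank one this kernel is one-dimensional, and as it contains $x_0^*>0$ it equals $\mathrm{Span}\{x_0^*\}$, completing the proof.
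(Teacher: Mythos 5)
Your proof is correct and takes essentially the same route as the paper: Lemma~\ref{seig} combined with Lemmas~\ref{faci1}/\ref{facb2} produces the positive eigenvectors, strict positivity of $x_0^*$, and $r(T)>0$; the identical Laurent-coefficient argument (pairing the nonzero positive leading coefficient $(T-r(T))^{m-1}P$ against the strictly positive functional $x_0^*$, which is annihilated by $(T^*-r(T))^{m-1}$ when $m\geq 2$) forces the pole to be simple; and the rank-one spectral projection then identifies both kernels. The only cosmetic differences are that you invoke Theorem~\ref{eig02} rather than Lemma~\ref{eig2} (equivalent here since the eigenvalue is $r(T)$ itself) and that you identify $P^*X^*$ by dualizing the resolvent instead of using the inclusion $0\neq\ker(r(T)-T^*)\subset P^*X^*$.
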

For Case \eqref{eig23i1}, a proof  of the assertions except the last one can be found in \cite{scha2} (see Theorem~2 there), a complete proof can be found in \cite{sawa} (see Theorems~1 and 2 there). 
Variants of Case \eqref{eig23i2} can be found in \cite{grob1} and \cite{kit}. We include here a simple proof for the convenience of the reader.
\begin{proof}We prove \eqref{eig23i2} first. By Lemma~\ref{seig}, there exists $x_0>0$ such that $Tx_0=r(T)x_0$. By Lemma~\ref{facb2}\,\eqref{facb2i1}, $r(T)>0$.\par
Let $r(T)$ be a pole of order $m$. Denote by $A_{-n}$ the coefficient of $(\lambda-r(T))^{-n}$ in the Laurent expansion of 
$R(\lambda,T)$ at $r(T)$. Then $A_{-m}=(T-r(T))^{m-1}P$, and $P^*$ is the spectral projection of $T^*$ for $r(T)$. Since $T^*x_0^*=r(T)x_0^*$, 
we know $x_0^*\in P^*X^*$, $P^*x_0^*=x_0^*$. By Lemma~\ref{seig}, we can take $x>0$ such that $A_{-m}x>0$. Then $0<x_0^*(A_{-m}x)=(T^*-r(T))^{m-1}(P^*x_0^*)(x)=[(T^*-r(T))^{m-1}x_0^*](x)$. 
If $m\geq 2$, then $(T^*-r(T))^{m-1}x_0^*=0$, 
yielding a contradiction! Hence $m=1$, that is, $r(T)$ is a simple pole.\par 
By the remark preceding this lemma and Lemma~\ref{eig2}\,\eqref{eig2i2}, $PX=\ker(r(T)-T)=\mathrm{Span}\{x_0\}$. Thus 
$\mathrm{rank}(P^*)=\mathrm{rank}(P)=1$. It follows from $0\neq\ker(r(T)-T^*)\subset P^*X^*$ that $P^*X^*=\ker(r(T)-T^*)=\mathrm{Span}\{x_0^*\}$.
The proof of \eqref{eig23i1} is similar, as in view of Lemmas~\ref{seig} and \ref{faci1}, there exist $x_0>0$ such that $Tx_0=r(T)x_0$ and a strictly positive functional $x_0^*$ such that $T^*x_0^*=r(T)x_0^*$.
\end{proof}

\begin{lemma}\label{copa}
\begin{enumerate}
\item\label{copai1}Suppose $T^*x_0^*=\lambda x_0^*$ for some strictly positive functional $x_0^*$. Then for any $x\in X$ such that $Tx\geq \lambda x$ or $Tx\leq \lambda x$, we have $Tx_\pm=\lambda x_\pm$;
\item\label{copai2}Suppose $Tx_0=\lambda x_0$ for some quasi-interior point $x_0>0$. Then for any $x^*\in X^*$ such that $T^*x^*\geq \lambda x^*$ or $T^*x^*\leq \lambda x^*$, we have $T^*x^*_\pm=\lambda x^*_\pm$;
\item\label{copai3}Let $T$ be $\sigma$-order continuous. Suppose $Tx_0=\lambda x_0$ for some weak unit $x_0>0$. Then for any $\sigma$-order continuous $x^*\in X^*$ such that $T^*x^*\geq \lambda x^*$ or $T^*x^*\leq \lambda x^*$, 
we have $T^*x^*_\pm=\lambda x^*_\pm$.
\end{enumerate}
\end{lemma}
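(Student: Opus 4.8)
The three parts share the same structure, so the plan is to treat them in parallel, focusing on part~\eqref{copai1}. Suppose $Tx \geq \lambda x$ (the case $Tx \leq \lambda x$ is symmetric). The key observation is that a strictly positive eigenfunctional $x_0^*$ for $T^*$ at eigenvalue $\lambda$ lets me detect when an a priori inequality is forced to be an equality: pairing $x_0^*$ against $Tx - \lambda x \geq 0$ and using $T^*x_0^* = \lambda x_0^*$ gives
\[
x_0^*(Tx - \lambda x) = (T^*x_0^*)(x) - \lambda x_0^*(x) = \lambda x_0^*(x) - \lambda x_0^*(x) = 0.
\]
Since $Tx - \lambda x \geq 0$ and $x_0^*$ is strictly positive, this forces $Tx = \lambda x$. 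So the hypothesis $Tx \geq \lambda x$ already upgrades to $Tx = \lambda x$.

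The remaining task is to pass from $Tx = \lambda x$ to the componentwise statement $Tx_\pm = \lambda x_\pm$. First I would write $x = x_+ - x_-$ and use positivity of $T$ to get $Tx_+ \geq (Tx)_+ = (\lambda x)_+ = \lambda x_+$ and similarly $Tx_- \geq \lambda x_-$; here I am using that $Tx_+ \geq T(x_+ \wedge \text{anything})$ type estimates reduce to the elementary lattice inequality $T y_+ \geq (Ty)_+$ valid for positive $T$. Adding the two inequalities $Tx_+ \geq \lambda x_+$ and $Tx_- \geq \lambda x_-$ and comparing with $Tx_+ - Tx_- = Tx = \lambda x = \lambda x_+ - \lambda x_-$ shows that the two nonnegative vectors $Tx_+ - \lambda x_+$ and $Tx_- - \lambda x_-$ are equal. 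Call this common value $u \geq 0$. Applying $x_0^*$ once more, $x_0^*(u) = x_0^*(Tx_+) - \lambda x_0^*(x_+) = (T^*x_0^*)(x_+) - \lambda x_0^*(x_+) = 0$, and strict positivity of $x_0^*$ forces $u = 0$, i.e.\ $Tx_\pm = \lambda x_\pm$.

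For parts~\eqref{copai2} and~\eqref{copai3} I would run the identical argument on $T^*$ in place of $T$, using the quasi-interior point $x_0$ (respectively the weak unit $x_0$) as the testing functional: a quasi-interior point $x_0$, regarded as an element of $X^{**}$, acts as a strictly positive functional on $X^*$, so $x_0(v) > 0$ for every $v > 0$ in $X^*$, which is exactly the detection property I used above. The only real subtlety is part~\eqref{copai3}: a weak unit need not be strictly positive on all of $X^*$, but the lemma restricts attention to $\sigma$-order continuous functionals, and a weak unit $x_0$ does pair strictly positively against $\sigma$-order continuous $x^* > 0$ (this is the observation in the proof of Lemma~\ref{chb2}). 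I therefore expect the main obstacle to be bookkeeping rather than conceptual: I must check that $x^*_\pm$ remain $\sigma$-order continuous so that the detection step applies to them, and that $\sigma$-order continuity of $T$ is what guarantees $T^*$ sends $\sigma$-order continuous functionals to $\sigma$-order continuous functionals, keeping the whole computation inside the correct class of functionals.
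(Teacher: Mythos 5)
Your proposal is correct and takes essentially the same route as the paper: first use the strictly positive eigenfunctional (respectively, the quasi-interior point acting on $X^*$, or the weak unit paired against $\sigma$-order continuous functionals) to upgrade $Tx\geq\lambda x$ to $Tx=\lambda x$, then run the same detection argument a second time to handle the parts — the paper bootstraps via $\lambda|x|\leq T|x|$ while you bootstrap via $Tx_\pm\geq\lambda x_\pm$ using $Ty_+\geq (Ty)_+$, which is the same idea in a slightly different dress, and your closing remarks on part \eqref{copai3} (that $x^*_\pm$ stay $\sigma$-order continuous and that $T^*$ preserves $\sigma$-order continuity of functionals when $T$ is $\sigma$-order continuous) are exactly the bookkeeping the paper relies on. One small point worth recording: your identity $(\lambda x)_+=\lambda x_+$ silently assumes $\lambda\geq 0$, which does hold automatically here since $\lambda x_0^*=T^*x_0^*\geq 0$ and $x_0^*>0$ (the paper's modulus formulation sidesteps even this).
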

\begin{proof}\eqref{copai1} Note that $0=(T^*x_0^*-\lambda x_0^*)(x)=x_0^*(Tx-\lambda x)$.
Since $x_0^*$ is strictly positive, $Tx=\lambda x$. This in turn implies $\lambda |x|\leq T|x|$. Using what we have just proved, we have $T|x|=\lambda |x|$.
Hence, $Tx_\pm=\lambda x_\pm$.\par
\eqref{copai3} Note that $(T^*x^*-\lambda x^*)(x_0)=x^*(Tx_0)-\lambda x^*(x_0)=0$. Since $T^*x^*-\lambda x^*$ is $\sigma$-order continuous and $x_0$ is a weak unit, $T^*x^*-\lambda x^*=0$, i.e., $T^*x^*=\lambda x^*$. 
This in turn implies $\lambda |x^*|\leq T|x^*|$.
Since $|x^*|$ is also $\sigma$-order continuous, applying what we have just proved, we have $T^*|x^*|=\lambda |x^*|$. Hence $Tx^*_\pm=\lambda x^*_\pm$.\par
\eqref{copai2} can be proved either similarly as \eqref{copai3}, or via \eqref{copai1} since $x_0$ acts as a strictly positive functional on $X^*$ such that $T^{**}x_0=\lambda x_0$.\end{proof}

Recall that an operator $S\in L(X)$ is called \textbf{order weakly compact} if $S[0,x]$ is relatively weakly compact for all $x>0$. 
This is a large class of operators containing all compact operators, AM-compact operators and weakly compact operators.
Recall also that an operator is called strictly singular if it fails to be invertible on any infinite-dimensional closed subspaces. Strictly singular operators are order weakly compact; see Corollary~3.4.5, p.~193, \cite{mn}.
We say that $S$ is power compact (power weakly compact, etc) if $S^k$ satisfies the property for some $k\geq 1$.\par
The following lemma  handles $\sigma$-order continuity of eigenfunctionals. The idea of the proof has appeared in \cite{grob1,grob2, sab}, ect.

\begin{lemma}\label{fac13}
\begin{enumerate}
 \item\label{fac13i1} If $T$ is $\sigma$-order continuous and order weakly compact, then $T$ is $\sigma$-order-to-norm continuous.
\item\label{fac13i2} If $T$ is power $\sigma$-order-to-norm continuous and $T^*x^*=\lambda x^*$ for some $\lambda\neq0$, then $x^*$ is $\sigma$-order continuous.
\item\label{fac13i3} If $T$ is $\sigma$-order continuous and power order weakly compact and $T^*x^*=\lambda x^*$ for some $\lambda\neq 0$, then $x^*$ is $\sigma$-order continuous.
\end{enumerate}
\end{lemma}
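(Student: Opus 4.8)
The plan is to treat part~\eqref{fac13i1} as the heart of the lemma and to obtain \eqref{fac13i2} and \eqref{fac13i3} from it by routine arguments. For \eqref{fac13i1} I would take an arbitrary sequence $x_n\downarrow 0$ in $X$ and show $\|Tx_n\|\to 0$; the general order-null case then follows by the domination $\|Tx_n\|\le\|Tu_n\|$ whenever $|x_n|\le u_n\downarrow 0$, since $T$ is positive. By $\sigma$-order continuity of $T$ we have $Tx_n\downarrow 0$ in order, and since $0\le x_n\le x_1$ the vectors $Tx_n$ all lie in $T[0,x_1]$, which is relatively weakly compact because $T$ is order weakly compact. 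Thus everything reduces to the purely lattice-theoretic claim: if $(y_n)$ is a sequence in a Banach lattice with $y_n\downarrow 0$ in order and $\{y_n\}$ relatively weakly compact, then $\|y_n\|\to 0$.

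To prove this claim I would argue in two steps. First, $y_n\rightharpoonup 0$ weakly: any weak limit $z$ of a subsequence satisfies $z\ge 0$ (the cone is weakly closed), and passing to the weak limit in the inequalities $y_m-y_n\ge 0$ (valid for $n\ge m$) gives $z\le y_m$ for every $m$; since $\inf_m y_m=0$ this forces $z\le 0$, hence $z=0$. As every weak cluster point of the relatively weakly compact sequence equals $0$, the whole sequence converges weakly to $0$. The main obstacle is the second step, upgrading weak nullity to norm nullity, and the key device is to combine Mazur's theorem with monotonicity of the lattice norm: for each $k$ the point $0$ lies in the norm-closed convex hull of the tail $\{y_n:n\ge k\}$, so there is a convex combination $z_k=\sum_{n\in F_k}\lambda_n y_n$ with $F_k\subseteq\{n\ge k\}$ and $\|z_k\|<1/k$; since $(y_n)$ is decreasing, $z_k$ dominates $y_{M_k}$ where $M_k=\max F_k$, whence $\|y_{M_k}\|\le\|z_k\|<1/k$. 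Because $(\|y_n\|)$ is itself decreasing, the existence of a norm-null subsequence forces $\|y_n\|\to 0$.

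For part~\eqref{fac13i2} I would iterate the eigen-equation to get $(T^k)^*x^*=\lambda^k x^*$, so that $x^*=\lambda^{-k}(T^k)^*x^*$ using $\lambda\neq 0$. Then for any $x_n\downarrow 0$ one has $x^*(x_n)=\lambda^{-k}x^*(T^k x_n)$, and since $T^k$ is $\sigma$-order-to-norm continuous we have $\|T^k x_n\|\to 0$, hence $x^*(x_n)\to 0$; thus $x^*$ is $\sigma$-order continuous. Finally, part~\eqref{fac13i3} is obtained by feeding \eqref{fac13i1} into \eqref{fac13i2}: if $T^m$ is order weakly compact, then, $T$ being $\sigma$-order continuous, so is $T^m$ (a composition of $\sigma$-order continuous positive operators is again $\sigma$-order continuous), and \eqref{fac13i1} shows $T^m$ is $\sigma$-order-to-norm continuous. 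Hence $T$ is power $\sigma$-order-to-norm continuous, and \eqref{fac13i2} applies to give $\sigma$-order continuity of $x^*$.
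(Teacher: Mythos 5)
Your proposal is correct and takes essentially the same route as the paper: part (1) is the core (positivity and $\sigma$-order continuity turn $x_n\downarrow 0$ into a decreasing sequence $Tx_n$ inside the relatively weakly compact set $T[0,x_1]$, whose weak limit is forced to be $0$, and monotonicity then upgrades weak convergence to norm convergence), part (2) is the observation that $x^*=\lambda^{-k}(T^k)^*x^*$ lies in the range of $(T^k)^*$, and part (3) chains (1) into (2). The only divergence is that where the paper cites Dini's theorem (Theorem~3.52, p.~174, \cite{ali}) for the weak-to-norm upgrade of the monotone sequence, you re-prove that fact from scratch via Mazur's theorem and monotonicity of the lattice norm, which is precisely the standard proof of the cited result, so your argument is a self-contained rendering of the paper's proof.
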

\begin{proof}\eqref{fac13i1} Take $x_n\downarrow0$. Then $Tx_n\downarrow0$. By Eberlein-Smulian theorem, $Tx_{n_j}\rightarrow y$ weakly for some $(n_j)$ and $y\in X$. Since $Tx_{n_j}$ is decreasing, 
it is straightforward verifications that $y=\inf_jTx_{n_j}=\inf_nTx_n=0$. Hence $||Tx_{n_j}||\rightarrow0$ by Dini theorem (see Theorem~3.52, p.~174, \cite{ali}). This in turn implies $||Tx_n||\rightarrow 0$.\par
For \eqref{fac13i2}, simply note that $x^*\in \mathrm{Range}(T^{k*})$. \eqref{fac13i3} follows from \eqref{fac13i1} and \eqref{fac13i2}.
\end{proof}

We end this section with the following variant of Theorem~\ref{eig02}, Lemmas~\ref{eig2}\,\eqref{eig2i3} and \ref{eig23}.
\begin{proposition}\label{eig233}Suppose $T$ is power order weakly compact such that $Tx_0=\lambda x_0$ and $T^*x_0^*=\lambda x_0^*$ for some $x_0>0$ and $x_0^*>0$. 
If $T$ satisfies one of the following:
\begin{enumerate}
 \item\label{eig233i1} $T$ is ideal irreducible;
\item\label{eig233i2} $T$ s band irreducible and $\sigma$-order continuous,
\end{enumerate}
then $\ker(\lambda-T)=\mathrm{Span}\{x_0\}$ and $\ker(\lambda-T^*)=\mathrm{Span}\{x_0^*\}$.
\end{proposition}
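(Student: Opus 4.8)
The plan is to establish the two kernel identities in three stages: first pin down the primal eigenspace from the irreducibility lemmas already proved, then reduce the dual claim to a statement about positive eigenfunctionals, and finally use power order weak compactness to eliminate the one remaining obstruction. I would begin with $\ker(\lambda-T)=\mathrm{Span}\{x_0\}$. In case \eqref{eig233i1} this is immediate from Lemma~\ref{eig2}\eqref{eig2i1}. In case \eqref{eig233i2} I would first invoke Lemma~\ref{fac13}\eqref{fac13i3}: since $T$ is $\sigma$-order continuous and power order weakly compact and $\lambda\neq0$, every eigenfunctional of $T^*$ for $\lambda$ (in particular $x_0^*$) is $\sigma$-order continuous; then Lemma~\ref{facb2}\eqref{facb2i2} gives that $x_0^*$ is strictly positive and $\lambda>0$, and Lemma~\ref{eig2}\eqref{eig2i2} yields the claim. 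Along the way I record that $\lambda>0$, that $x_0$ is quasi-interior (case \eqref{eig233i1}, Lemma~\ref{faci1}\eqref{faci1i1}) or a weak unit (case \eqref{eig233i2}, Lemma~\ref{facb2}\eqref{facb2i1}), and that every positive eigenfunctional of $T^*$ for $\lambda$ is strictly positive (Lemma~\ref{faci1}\eqref{faci1i2} or Lemma~\ref{facb2}\eqref{facb2i2}), and in case \eqref{eig233i2} also $\sigma$-order continuous.

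Next I would reduce $\ker(\lambda-T^*)=\mathrm{Span}\{x_0^*\}$ to proportionality of positive eigenfunctionals. For $y^*\in\ker(\lambda-T^*)$, Lemma~\ref{copa}\eqref{copai2} (case \eqref{eig233i1}, since $x_0$ is quasi-interior) or Lemma~\ref{copa}\eqref{copai3} (case \eqref{eig233i2}, since $x_0$ is a weak unit and $y^*$ is $\sigma$-order continuous) gives $T^*y^*_\pm=\lambda y^*_\pm$, so $\ker(\lambda-T^*)$ is spanned by its positive eigenfunctionals. It therefore suffices to show any two positive eigenfunctionals $u^*,v^*$ are proportional. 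Applying the same part of Lemma~\ref{copa} to $v^*-tu^*\in\ker(\lambda-T^*)$ shows that $(v^*-tu^*)_+$ and $(tu^*-v^*)_+$ are disjoint positive eigenfunctionals for every $t\geq0$. Granting that two nonzero disjoint positive eigenfunctionals cannot coexist, for each $t$ one of these parts vanishes; setting $t^*=\sup\{t\geq0:tu^*\leq v^*\}$ (finite because the lattice norm is monotone, attained because the cone is closed) then forces $v^*=t^*u^*$ by a standard squeezing argument.

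The heart of the proof, and the step where power order weak compactness is indispensable, is the claim that there are no two nonzero disjoint positive eigenfunctionals $p^*,q^*$. Both are strictly positive by the lemmas recorded above. Fix $k$ with $T^k$ order weakly compact. Since $p^*\wedge q^*=0$, the Riesz--Kantorovich formula applied at $x_0$ yields decompositions $x_0=a_n+b_n$ with $a_n,b_n\geq0$ and $p^*(a_n)+q^*(b_n)\to0$. As $0\leq a_n\leq x_0$, the sequence $(T^k a_n)$ lies in the relatively weakly compact set $T^k[0,x_0]$, so along a subsequence $T^k a_{n_j}\rightharpoonup w\geq0$. From $p^*(T^k a_{n_j})=\lambda^k p^*(a_{n_j})\to0$ and weak convergence I get $p^*(w)=0$, whence $w=0$ by strict positivity of $p^*$. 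Then $T^k b_{n_j}=\lambda^k x_0-T^k a_{n_j}\rightharpoonup\lambda^k x_0$, so $q^*(T^k b_{n_j})\to\lambda^k q^*(x_0)>0$; but $q^*(T^k b_{n_j})=\lambda^k q^*(b_{n_j})\to0$, a contradiction.

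I expect this disjointness step to be the main obstacle. The difficulty is that strict positivity of the eigenfunctionals alone does not preclude disjoint pairs in a general Banach lattice, so irreducibility must be combined with a genuine compactness input; the mechanism is to route the Riesz--Kantorovich splitting of the fixed positive eigenvector $x_0$ through the weakly compact image $T^k[0,x_0]$, which converts the hypothesis into the contradiction above. It is also the one point at which the two irreducibility regimes merge: once the reductions of the first paragraph place us in the situation of strictly positive (and, in case \eqref{eig233i2}, $\sigma$-order continuous) eigenfunctionals together with $\lambda>0$ and a positive eigenvector $x_0$, the remaining argument is uniform and uses only power order weak compactness.
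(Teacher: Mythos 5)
Your proposal is correct and takes essentially the same approach as the paper: the same preparatory reductions (Lemmas~\ref{facb2}/\ref{faci1}, \ref{fac13}, \ref{eig2}, \ref{copa}) and the same core mechanism, namely the Riesz--Kantorovich formula at $x_0$ routed through the order weakly compact power $T^k$, Eberlein--Smulian, and strict positivity of the eigenfunctionals to reach a contradiction. The only difference is organizational: the paper contradicts $\dim\ker(\lambda-T^*)>1$ by splitting a single kernel element into its disjoint positive and negative parts, whereas you run the identical disjointness argument for an arbitrary pair of positive eigenfunctionals and add an explicit squeezing step (which, if anything, spells out a detail the paper leaves implicit).
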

\begin{proof}We only prove \eqref{eig233i2}; the proof of \eqref{eig233i1} is similar. By Lemma~\ref{facb2}\,\eqref{facb2i1}, $x_0$ is a weak unit and $\lambda>0$. 
Hence $x_0^*$ is $\sigma$-order continuous by Lemma~\ref{fac13}\,\eqref{fac13i3}, and is strictly positive by Lemma~\ref{facb2}\,\eqref{facb2i2}. Therefore, by Lemma~\ref{eig2}\,\eqref{eig2i2}, 
$\ker(\lambda-T)=\mathrm{Span}\{x_0\}$.\par
Without loss of generality, assume $\lambda=1$. Suppose that $\dim\ker(1-T^*)>1$. Then there exists $x^*\in \ker(1-T^*)$ with $x^*_+>0$ and $x^*_->0$. 
By Lemma~\ref{fac13}\,\eqref{fac13i3}, $x^*$ is $\sigma$-order continuous. Since $x_0$ is a weak unit, Lemma~\ref{copa}\,\eqref{copai3}
implies that $x^*_\pm\in\ker(1-T^*)$. Let $k\geq 1$ be such that $T^k$ is order weakly compact. Then $(T^k)^*x^*_\pm=x^*_\pm$. It follows that
\begin{align*}0=&x^*_+\wedge x^*_-(x_0)=\inf_{0\leq x\leq x_0}(x^*_+(x_0-x)+x^*_-(x))\\
=&\inf_{0\leq x\leq x_0}((T^k)^*x^*_+(x_0-x)+(T^k)^*x^*_-(x))=\inf_{0\leq x\leq x_0}(x^*_+(x_0-T^kx)+x^*_-(T^kx)).\end{align*}
So we can take $x_n\in[0,x_0]$ such that $x^*_+(x_0-T^kx_n)+x^*_-(T^kx_n)\rightarrow 0$. Using order weak compactness of $T^k$ and Eberlein-Smulian theorem, we can assume, by passing to a subsequence, 
that $T^kx_n\rightarrow y\in[0,T^kx_0]=[0,x_0]$ weakly. In particular, we have $x^*_+(x_0-y)+x^*_-(y)=0$. It follows that $x^*_+(x_0-y)=x^*_-(y)=0$. 
Since $x^*_\pm$ are both $\sigma$-order continuous and lie in $\ker(1-T^*)$, they are both strictly positive by Lemma~\ref{facb2}\,\eqref{facb2i2}. This forces $x_0-y=y=0$. Thus $x_0=0$, 
which is absurd. Therefore, $\dim\ker(\lambda-T^*)=1$, and $\ker(\lambda-T^*)=\mathrm{Span}\{x_0^*\}$.
\end{proof}

\section{Main Theorems}\label{sec5}
The following is straightforward.
\begin{lemma}\label{copa0}
Suppose $0\leq S\leq T$. Then $T=S$ if any of the following is satisfied:
\begin{enumerate}
\item\label{copa0i1} $T^*x_0^*=S^*x_0^*$ for some strictly positive functional $x_0^*$;
\item\label{copa0i2} $Tx_0=S x_0$ for some quasi-interior point $x_0>0$;
\item\label{copa0i3} $T$ is $\sigma$-order continuous and $Tx_0=S x_0$ for some weak unit $x_0>0$.
\end{enumerate}
\end{lemma}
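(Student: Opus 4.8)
The plan is to set $R=T-S\ge 0$ and show $R=0$ in each case; since $X=X^{+}-X^{+}$ and $X^{*}=(X^{*})^{+}-(X^{*})^{+}$, it suffices to annihilate $R$ (or its adjoint $R^{*}$) on the positive cone. Each hypothesis pins $R$ down along a single ``large'' vector or functional, and the work is to propagate this to the whole space using the strict-positivity / quasi-interior / weak-unit dictionary already recorded in Section~\ref{sec1}.

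For \eqref{copa0i1} I would argue on the dual side. Fix $x\ge 0$; then $Rx\ge 0$ because $R\ge 0$, while $x_0^{*}(Rx)=\big(R^{*}x_0^{*}\big)(x)=\big(T^{*}x_0^{*}-S^{*}x_0^{*}\big)(x)=0$. Since $x_0^{*}$ is strictly positive and $Rx\ge 0$, this forces $Rx=0$. As $x\ge 0$ was arbitrary, $R=0$.

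For \eqref{copa0i2} the cleanest route is to reduce to \eqref{copa0i1} applied to the adjoints. A quasi-interior point $x_0$ acts as a strictly positive functional on $X^{*}$ (this is precisely the characterization $x^{*}(x_0)>0$ for all $x^{*}>0$ used in the proof of Lemma~\ref{chi1}), and under the canonical embedding $T^{**}x_0=Tx_0=Sx_0=S^{**}x_0$, i.e. $(T^{*})^{*}x_0=(S^{*})^{*}x_0$. Since $0\le S^{*}\le T^{*}$, case \eqref{copa0i1} applied to the pair $S^{*}\le T^{*}$ and the strictly positive functional $x_0\in X^{**}$ yields $T^{*}=S^{*}$, hence $T=S$. (Alternatively one argues directly: for every $x^{*}\ge 0$ the element $R^{*}x^{*}\ge 0$ satisfies $(R^{*}x^{*})(x_0)=x^{*}(Rx_0)=0$, and quasi-interiority of $x_0$ forces $R^{*}x^{*}=0$.)

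Case \eqref{copa0i3} is the one I expect to cause trouble, and it is where the proof genuinely uses the lattice rather than just duality: the dual argument of \eqref{copa0i2} collapses because $\sigma$-order continuous functionals need not separate the points of $X$, so vanishing of $R^{*}$ on them carries no information. I would instead argue directly in $X$, relying on two standard facts. First, $0\le S\le T$ with $T$ $\sigma$-order continuous forces $S$, and hence $R$, to be $\sigma$-order continuous: if $x_n\downarrow 0$ then $0\le Sx_n\le Tx_n\downarrow 0$, so any lower bound of $(Sx_n)$ is a lower bound of $(Tx_n)$ and therefore $\le 0$, whence $Sx_n\downarrow 0$. Second, a weak unit generates $X$ as a band, so every $x\ge 0$ satisfies $x\wedge nx_0\uparrow x$ in order. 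Granting these, the conclusion is immediate: $\sigma$-order continuity of $R$ gives $R(x\wedge nx_0)\uparrow Rx$, while $0\le R(x\wedge nx_0)\le R(nx_0)=nRx_0=0$ shows each term vanishes, so $Rx=\sup_n R(x\wedge nx_0)=0$ and again $R=0$. The monotone-convergence step is the crux here; everything else reduces to the positivity bookkeeping of \eqref{copa0i1} and \eqref{copa0i2}.
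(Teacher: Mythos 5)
Your proof is correct. The paper states this lemma without proof (``The following is straightforward''), and your three arguments --- applying the strictly positive functional $x_0^*$ to $(T-S)x\geq 0$ in \eqref{copa0i1}, dualizing via the quasi-interior point in \eqref{copa0i2}, and combining $\sigma$-order continuity of $T-S$ with $x\wedge nx_0\uparrow x$ for the weak unit $x_0$ in \eqref{copa0i3} --- are exactly the routine verifications the paper intends.
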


\begin{lemma}\label{gc0}Suppose $0\leq S\leq T$ and $T$ is ideal irreducible. Then $T=S$ if any of the following is satisfied:
\begin{enumerate}
\item\label{gc0i1} $T^*x_0^*=\lambda x_0^*$ for some $x_0^*>0$ and $ Sx_0\geq \lambda x_0$ for some $x_0>0$;
\item\label{gc0i2} $Tx_0=\lambda x_0$ for some $x_0>0$ and $S^*x_0^*\geq \lambda x_0^*$ for some $x_0^*>0$.
\end{enumerate}
Suppose $0\leq S\leq T$ and $T$ is band irreducible and $\sigma$-order continuous. Then $T=S$ if any of the following is satisfied:
\begin{enumerate}
 \item[(1')]\label{gc0i11} $T^*x_0^*=\lambda x_0^*$ for some strictly positive $x_0^*>0$ and $Sx_0\geq \lambda x_0$ for some $x_0>0$;
\item[(2')]\label{gc0i21} $Tx_0=\lambda x_0$ for some $x_0>0$ and $S^*x_0^*\geq \lambda x_0^*$ for some $\sigma$-order continuous $x_0^*>0 $.
\end{enumerate}
\end{lemma}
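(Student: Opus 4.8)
The plan is to handle all four cases by a single two-step mechanism. In each case one of the two pieces of data is an exact eigen-equation for $T$ or $T^*$, while the other is only a one-sided inequality for $S$ or $S^*$; since $0\le S\le T$ forces $0\le S^*\le T^*$, I will first use the sandwich to promote the inequality to the \emph{same} exact eigen-equation for $T$ (resp.\ $T^*$), then squeeze to conclude that $S$ (resp.\ $S^*$) agrees with $T$ (resp.\ $T^*$) on the distinguished vector, and finally invoke the matching clause of Lemma~\ref{copa0}. The irreducibility hypotheses enter only to supply strict positivity, quasi-interiority, or weak-unit membership of the eigendata, via Lemmas~\ref{faci1} and \ref{facb2}.

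Consider first (1) and (1'), where $T^*x_0^*=\lambda x_0^*$ and $Sx_0\ge\lambda x_0$. From $S\le T$ we get $Tx_0\ge Sx_0\ge\lambda x_0$. In case (1) Lemma~\ref{faci1}\,\eqref{faci1i2} makes $x_0^*$ strictly positive (and $\lambda>0$), while in (1') strict positivity of $x_0^*$ is assumed; either way Lemma~\ref{copa}\,\eqref{copai1}, applied to $x=x_0$ (so that $x_-=0$), turns $Tx_0\ge\lambda x_0$ into $Tx_0=\lambda x_0$. The sandwich $\lambda x_0\le Sx_0\le Tx_0=\lambda x_0$ then gives $Sx_0=Tx_0$. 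To finish I note that $x_0$ is quasi-interior by Lemma~\ref{faci1}\,\eqref{faci1i1} in case (1), so Lemma~\ref{copa0}\,\eqref{copa0i2} yields $T=S$; in case (1') $x_0$ is a weak unit by Lemma~\ref{facb2}\,\eqref{facb2i1} and $T$ is $\sigma$-order continuous, so Lemma~\ref{copa0}\,\eqref{copa0i3} yields $T=S$.

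Cases (2) and (2') are dual. Here $Tx_0=\lambda x_0$ and $S^*x_0^*\ge\lambda x_0^*$, and $S^*\le T^*$ gives $T^*x_0^*\ge S^*x_0^*\ge\lambda x_0^*$. In case (2) Lemma~\ref{faci1}\,\eqref{faci1i1} makes $x_0$ quasi-interior, so Lemma~\ref{copa}\,\eqref{copai2} applied to $x^*=x_0^*$ promotes $T^*x_0^*\ge\lambda x_0^*$ to $T^*x_0^*=\lambda x_0^*$; the sandwich then gives $S^*x_0^*=T^*x_0^*$, and since $x_0^*$ is strictly positive by Lemma~\ref{faci1}\,\eqref{faci1i2}, Lemma~\ref{copa0}\,\eqref{copa0i1} gives $T=S$. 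In case (2') Lemma~\ref{facb2}\,\eqref{facb2i1} makes $x_0$ a weak unit; since $x_0^*$ is $\sigma$-order continuous and $T$ is $\sigma$-order continuous, Lemma~\ref{copa}\,\eqref{copai3} applied to $x^*=x_0^*$ again promotes the inequality to $T^*x_0^*=\lambda x_0^*$, the sandwich gives $S^*x_0^*=T^*x_0^*$, and strict positivity of $x_0^*$ from Lemma~\ref{facb2}\,\eqref{facb2i2} lets Lemma~\ref{copa0}\,\eqref{copa0i1} close the argument.

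Given the machinery already assembled, no single step is genuinely hard; the real care is bookkeeping which irreducibility lemma supplies which property and, in the band-irreducible cases, checking that the $\sigma$-order-continuity hypotheses line up so that Lemma~\ref{copa}\,\eqref{copai3} is actually applicable. The one point worth double-checking is that in (2') the functional $T^*x_0^*-\lambda x_0^*$ is itself $\sigma$-order continuous (so that ``positive and vanishing on a weak unit forces zero'' applies); this is exactly the content already packaged into Lemma~\ref{copa}\,\eqref{copai3}, so invoking that clause rather than re-deriving the promotion step by hand is what keeps the proof short.
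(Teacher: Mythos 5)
Your proposal is correct and follows essentially the same route as the paper: the paper proves only case (2') — weak unit via Lemma~\ref{facb2}\,\eqref{facb2i1}, promotion of the sandwiched inequality to equality via Lemma~\ref{copa}\,\eqref{copai3}, strict positivity via Lemma~\ref{facb2}\,\eqref{facb2i2}, then Lemma~\ref{copa0}\,\eqref{copa0i1} — and your (2') matches it step for step. The remaining cases, which the paper dismisses as ``similar,'' are worked out correctly in your write-up with the right clauses of Lemmas~\ref{faci1}, \ref{copa} and \ref{copa0}.
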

\begin{proof}
We only prove (2'); the other cases can be proved similarly. By Lemma~\ref{facb2}\,\eqref{facb2i1}, we know $x_0$ is a weak unit. Now note $\lambda x_0^*\leq S^*x_0^*\leq T^*x_0^*$. 
Hence, $\lambda x_0^*=S^*x_0^*=T^*x_0^*$ by Lemma~\ref{copa}\,\eqref{copai3}. By Lemma~\ref{facb2}\,\eqref{facb2i2}, $x_0^*$ is strictly positive. Therefore, it follows from $T^*x_0^*=S^*x_0^*$ that $T=S$ 
by Lemma~\ref{copa0}\,\eqref{copa0i1}.
\end{proof}
\begin{lemma}\label{omco}Suppose $T$ and $S$ are compact, $0\leq S\leq T$ and $r(T)=r(S)$. Then $T=S$ if $T$ is either ideal irreducible, or band irreducible and $\sigma$-order continuous.
\end{lemma}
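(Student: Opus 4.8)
The plan is to feed suitable positive eigenvectors of $S$ and of $T^*$ into the comparison criterion of Lemma~\ref{gc0}, taking $\lambda:=r(T)=r(S)$ throughout. Concretely, I would verify the hypotheses of Lemma~\ref{gc0}\,\eqref{gc0i1} in the ideal irreducible case, and those of its primed variant (1$'$) in the band irreducible, $\sigma$-order continuous case, and then invoke that lemma to conclude $T=S$.

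The first and most delicate step is to secure $\lambda>0$. The lemmas reproduced above all take a positive eigenvalue, or the pole property of the resolvent, as given; none of them produces positivity of the spectral radius from irreducibility alone. This is exactly where compactness enters: since $T$ is a non-zero positive compact irreducible operator, de Pagter's theorem in the ideal irreducible case (\cite{pag}), and its band irreducible analogue on $\sigma$-order continuous lattices (\cite{grob1,grob2}), guarantees $r(T)>0$; hence $\lambda=r(S)=r(T)>0$ as well. I regard this as the main obstacle, in that it is the one genuinely non-elementary input and it is not available from the excerpt's lemmas.

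With $\lambda>0$ in hand the rest is assembly. Because $T$ and $S$ are compact and $\lambda$ is a non-zero point of their respective spectra (recall that the spectral radius of a positive operator lies in its spectrum), Riesz--Schauder theory makes $\lambda$ a pole of $R(\cdot,T)$ and of $R(\cdot,S)$, so Lemma~\ref{seig} applies to both. Applying it to $S$ yields $x_0>0$ with $Sx_0=\lambda x_0$, whence $Sx_0\geq\lambda x_0$; applying it to $T$ yields $x_0^*>0$ with $T^*x_0^*=\lambda x_0^*$. In the ideal irreducible case this is exactly the data required by Lemma~\ref{gc0}\,\eqref{gc0i1}, and we conclude $T=S$.

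In the band irreducible, $\sigma$-order continuous case I must still upgrade $x_0^*$ to a strictly positive functional, as demanded by variant (1$'$). For this I would observe that $T$, being compact, is power order weakly compact, and that $T$ is $\sigma$-order continuous by hypothesis; since $T^*x_0^*=\lambda x_0^*$ with $\lambda\neq0$, Lemma~\ref{fac13}\,\eqref{fac13i3} shows $x_0^*$ is $\sigma$-order continuous, and then Lemma~\ref{facb2}\,\eqref{facb2i2} shows it is strictly positive. The hypotheses of variant (1$'$) of Lemma~\ref{gc0} are now met, with $Sx_0\geq\lambda x_0$, giving $T=S$ and completing the proof. Apart from the positivity $r(T)>0$, every step is routine bookkeeping of the eigenvector data.
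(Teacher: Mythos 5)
Your proof is correct and follows essentially the same route as the paper's: de Pagter's theorem (resp.\ the Schaefer--Grobler theorem) for $r(T)>0$, compactness to produce the positive eigenvector and eigenfunctional, Lemma~\ref{fac13} together with Lemma~\ref{facb2} to upgrade the eigenfunctional, and Lemma~\ref{gc0} to conclude. The only (immaterial) difference is that you invoke clauses (1)/(1') of Lemma~\ref{gc0} — eigenvector of $S$, eigenfunctional of $T^*$ — whereas the paper invokes (2)/(2') — eigenvector of $T$, eigenfunctional of $S^*$, noting that $S$ inherits $\sigma$-order continuity from $T$ by domination.
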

\begin{proof}Suppose that $T$ is band irreducible and $\sigma$-order continuous. By Schaefer-Grobler Theorem~\cite{grob1} (see also Corollary~9.33, p.~367, \cite{abr}), $r(S)=r(T)>0$. Since $T$ and $S$ are both compact,
by Krein-Rutman Theorem, we can take $x_0>0$ and $x_0^*>0$ such that $Tx_0=r(T)x_0$ and $S^*x_0^*=r(S)x_0^*$. Since $S$ is also $\sigma$-order continuous, $x_0^*$ is $\sigma$-order continuous by Lemma~\ref{fac13}\,\eqref{fac13i3}.
This completes the proof by Lemma~\ref{gc0}\,(2').\par
The case when $T$ is ideal irreducible can be proved by using similar arguments and de Pagter's theorem~\cite{pag} that compact ideal irreducible operators are non-quasinilpotent.
\end{proof}

\begin{theorem}\label{mainc1}Suppose $0\leq S\leq T$, $r(T)=r(S)$, and $S^k$ is non-zero and compact for some $k\geq 1$. Then $T=S$ if $T$ is either ideal irreducible or band irreducible and $\sigma$-order continuous.
\end{theorem}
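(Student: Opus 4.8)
The plan is to manufacture, at the common value $r:=r(T)=r(S)$, a positive eigenvector of $T$ and a positive eigenfunctional of $T^*$, and then to close the argument with the one-sided comparison already packaged in Lemma~\ref{gc0}. Since $S^k$ is compact, $S$ is a Riesz operator, so each nonzero point of $\sigma(S)$ is a pole of $R(\cdot,S)$ with finite-rank spectral projection. One first records that $r>0$: an ideal (respectively, band) irreducible operator that dominates the nonzero power-compact operator $S$ with $r(T)=r(S)$ cannot be quasinilpotent, by the de~Pagter--Schaefer--Grobler reasoning already invoked in Lemma~\ref{omco}. Hence $r$ is a pole of $R(\cdot,S)$, and Lemma~\ref{seig} applied to $S$ and to $S^*$ yields $x_0>0$ and $x_0^*>0$ with $Sx_0=rx_0$ and $S^*x_0^*=rx_0^*$. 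In the band case $S$ inherits $\sigma$-order continuity from $T$ (if $x_n\downarrow 0$ then $0\le Sx_n\le Tx_n\downarrow 0$), and since $S^k$ is order weakly compact, Lemma~\ref{fac13}\,\eqref{fac13i3} makes $x_0^*$ $\sigma$-order continuous.

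The substance of the proof is to pass from the $S$-data to genuine eigenvectors of $T$ and $T^*$. The clean mechanism is a resolvent limit: for $\mu>r$ one has $(\mu-r)R(\mu,T)x_0=x_0+R(\mu,T)(T-S)x_0\ge x_0>0$, and the identity $T\big[(\mu-r)R(\mu,T)x_0\big]=\mu(\mu-r)R(\mu,T)x_0-(\mu-r)x_0$ shows that if $y:=\lim_{\mu\downarrow r}(\mu-r)R(\mu,T)x_0$ exists in norm, then $Ty=ry$ with $y\ge x_0>0$; the dual computation, using $S^*x_0^*=rx_0^*$, produces $y^*\ge x_0^*>0$ with $T^*y^*=ry^*$. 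Thus everything reduces to the existence of these limits, which is exactly the assertion that $r$ is a pole of $R(\cdot,T)$. I expect this transfer of the pole from $S$ to $T$ to be the main obstacle. The natural route is to exploit $0\le R(\mu,S)\le R(\mu,T)$ for $\mu>r$ together with the fact that, $S$ being Riesz, the Laurent expansion $R(\mu,S)=\Pi(\mu)+H(\mu)$ at $r$ has finite-rank principal part $\Pi$ and holomorphic remainder $H$; feeding this into the second resolvent identity $R(\mu,T)=R(\mu,S)+R(\mu,S)(T-S)R(\mu,T)$ and solving for $R(\mu,T)$ should exhibit its singular part at $r$ as finite-rank--valued, which precludes an essential singularity and forces $r$ to be a pole. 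The delicate point is to legitimise the rearrangement (inverting $I-H(r)(T-S)$, or else running an analytic-Fredholm/meromorphy argument) on a full punctured neighbourhood of $r$; it is here that the positivity $0\le T-S$ and the equality $r(S)=r(T)$ must be used, and this is a comparison statement of the type treated in \cite{marek1,alek}.

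Granting that $r$ is a pole of $R(\cdot,T)$, Lemma~\ref{seig} applied to $T$ yields $x_1>0$ and $x_1^*>0$ with $Tx_1=rx_1$ and $T^*x_1^*=rx_1^*$. In the ideal irreducible case, the relations $T^*x_1^*=rx_1^*$ with $x_1^*>0$ and $Sx_0=rx_0\ge rx_0$ let Lemma~\ref{gc0}\,\eqref{gc0i1} conclude $T=S$. In the band irreducible, $\sigma$-order continuous case, the relations $Tx_1=rx_1$ with $x_1>0$ together with the $\sigma$-order continuous functional $x_0^*>0$ satisfying $S^*x_0^*=rx_0^*\ge rx_0^*$ let Lemma~\ref{gc0}\,(2$'$) conclude $T=S$. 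This completes the proof.
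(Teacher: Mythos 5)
Your proof stands or falls on the claim that $r$ is a pole of $R(\cdot,T)$, and you do not prove it: you name it as the main obstacle, sketch a route, and explicitly leave the ``delicate point'' open. That point is fatal to the route as sketched. Writing $R(\mu,S)=\Pi(\mu)+H(\mu)$ with $\Pi$ the finite-rank principal part at $r$, the family $(T-S)R(\mu,S)$ is \emph{not} compact-valued --- only $(T-S)\Pi(\mu)$ is finite rank, while $(T-S)H(\mu)$ is a general bounded operator with no smallness or compactness near $\mu=r$ --- so neither a Neumann-series inversion of $I-(T-S)H(\mu)$ nor the analytic/meromorphic Fredholm theorem applies. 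Nor can the transfer be a purely function-theoretic fact about dominated resolvents: as you yourself note, the positivity $0\le S\le T$, the equality $r(S)=r(T)$, and irreducibility must all enter at exactly this step, which means this step carries essentially the whole content of the theorem. Indeed, once the pole of $R(\cdot,T)$ at $r$ is granted, you are in the situation of Theorem~\ref{coo}\,\eqref{cooi1}, which the paper proves \emph{after} and \emph{by means of} Theorem~\ref{mainc1} (through Lemma~\ref{omco}); so relative to the tools available, your reduction runs the logic backwards. A secondary soft spot: your opening claim $r>0$ invokes de~Pagter/Schaefer--Grobler, but those results concern an irreducible operator dominating a non-zero \emph{compact} operator, whereas here $T$ dominates only the power-compact $S$; one has $T^k\ge S^k$ with $S^k$ compact, but $T^k$ need not be irreducible, so an additional argument is required (this part is fixable, e.g.\ by the construction below, but not by citation alone).

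For contrast, the paper's proof never analyzes the spectrum of $T$ at all. Normalizing $\|T\|<1$, it sets $\widetilde T=\sum_1^\infty T^m$ and $\widetilde S=\sum_1^\infty S^m$ and sandwiches $0\le \widetilde S S^k\widetilde S\le \widetilde T S^k\widetilde T\le \widetilde T T^k\widetilde T$. The spectral mapping theorem and $r(T)=r(S)$ force the two outer operators to have equal spectral radii, hence $r(\widetilde T S^k\widetilde T)=r(\widetilde S S^k\widetilde S)$; both middle operators are compact because $S^k$ is, and $\widetilde T S^k\widetilde T$ is expanding (hence irreducible) because $\widetilde T$ is and a positive operator cannot annihilate a quasi-interior point (or, in the $\sigma$-order continuous case, a weak unit) without vanishing. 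Lemma~\ref{omco} --- the only place where compactness-based spectral theory (Krein--Rutman, de~Pagter, Schaefer--Grobler) is used --- then yields $\widetilde T S^k\widetilde T=\widetilde S S^k\widetilde S$, and strict positivity of $\widetilde T$ and of $S^k$ peels this equality back to $\widetilde T=\widetilde S$, hence $T=S$. Your endgame via Lemma~\ref{seig} and Lemma~\ref{gc0} is correct as far as it goes, but to salvage the proposal you must either genuinely prove the pole transfer (which I do not believe can be done by resolvent-identity manipulation alone) or replace it by a reduction of this sandwich type.
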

\begin{proof}
We only prove the band irreducible case; the other case can be proved similarly. Without loss of generality, assume $||T||<1$. Put $\widetilde{T}=\sum_1^\infty T^m$ and $\widetilde{S}=\sum_1^\infty S^m$.
Recall that $r(T)\in \sigma(T)$ and $r(S)\in\sigma(S)$. Thus by the spectral mapping theorem, it is easily seen that $$r(\widetilde{T}T^k\widetilde{T})=r(T)^k\left(\sum_1^\infty r(T)^m\right)^2= 
r(S)^k\left(\sum_1^\infty r(S)^m\right)^2=r(\widetilde{S}S^k\widetilde{S}).$$Together with $\widetilde{T}T^k\widetilde{T}\geq \widetilde{T}S^k\widetilde{T}\geq \widetilde{S}S^k\widetilde{S}$, 
this implies $r(\widetilde{T}S^k\widetilde{T})=r( \widetilde{S}S^k\widetilde{S})$.\par
Recall that $\widetilde{T}$ is $\sigma$-order continuous. Hence, so is $ \widetilde{T}S^k\widetilde{T}$. By Lemma~\ref{chb2}\,\eqref{chb2i2}, $\widetilde{T}$ is expanding, 
hence so is $ \widetilde{T}S^k\widetilde{T}$; in particular, $ \widetilde{T}S^k\widetilde{T}$ is band irreducible. Finally note that since $S^k$ is compact, so are $\widetilde{T}S^k\widetilde{T}$ and $\widetilde{S}S^k\widetilde{S}$. 
Applying Lemma~\ref{omco} to $0\leq\widetilde{S}S^k\widetilde{S}\leq\widetilde{T}S^k\widetilde{T}$, we have $\widetilde{T}S^k\widetilde{T}=\widetilde{S}S^k\widetilde{S}$.\par
Note that $\widetilde{T}S^k\widetilde{T}\geq \widetilde{T}S^k\widetilde{S}\geq \widetilde{S}S^k\widetilde{S}$. Hence, $\widetilde{T}S^k\widetilde{T}= \widetilde{T}S^k\widetilde{S}$. 
Since $\widetilde{T} $ is strictly positive and $S^k\widetilde{T}\geq S^k\widetilde{S}$, we have $S^k\widetilde{T}= S^k\widetilde{S}$. If $S^kx=0$ for some $x>0$, then $S^k\widetilde{T}x=\widetilde{S}S^kx=0$. 
But $\widetilde{T}x$ is a weak unit, forcing $S^k=0$, which is absurd. Hence $S^k$ is strictly positive. Now it follows from $S^k\widetilde{T}=S^k\widetilde{S}$ and $\widetilde{T}\geq \widetilde{S}$ that 
$\widetilde{T}=\widetilde{S}$. This in turn implies $T=S$.
\end{proof}

We are now ready to present a generalization of Theorem~\ref{crad0} to operators on arbitrary Banach lattices.
\begin{corollary}\label{ckm11}Suppose $0\leq S\leq T$ and $r(T)=r(S)$. Then $T=S$ if one of the following is satisfied:\par
\begin{enumerate}
 \item\label{ckm11i1} $T$ is power compact, and is either ideal irreducible or band irreducible and $\sigma$-order continuous;
\item\label{ckm11i2}  $S$ is power compact, and either $S$ is ideal irreducible, or $S$ is band irreducible and $T$ is $\sigma$-order continuous.
\end{enumerate}
\end{corollary}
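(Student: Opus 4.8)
The plan is to derive both cases from Theorem~\ref{mainc1}, whose conclusion $T=S$ holds as soon as $T$ is irreducible (ideal, or band and $\sigma$-order continuous) and \emph{some nonzero power of $S$ is compact} --- note that it does not ask $S$ itself to be irreducible. So the entire task is to manufacture these two hypotheses in each case, and the two mechanisms I would use are complementary transfer principles: irreducibility passes \emph{up} from $S$ to $T$ under domination, while compactness passes \emph{down} from $T$ to $S$. For the first principle, suppose $0\le S\le T$ and $J$ is a nonzero closed $T$-invariant ideal. For $x\in J_+$ we have $0\le Sx\le Tx\in J$, so $Sx\in J$ since $J$ is an ideal; decomposing a general $x\in J$ into $x_\pm$ shows $J$ is $S$-invariant. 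Hence every closed $T$-invariant ideal is $S$-invariant, so ideal irreducibility of $S$ forces it for $T$; the same argument (a band is an ideal) transfers band irreducibility. This disposes of Case~\eqref{ckm11i2}: in the ideal subcase $T$ becomes ideal irreducible, and in the band subcase $T$ becomes band irreducible while being $\sigma$-order continuous by hypothesis. Since $S$ is irreducible and power compact, de Pagter's theorem~\cite{pag} (respectively the Schaefer--Grobler theorem~\cite{grob1}) gives $r(S)>0$, so $S$ is non-quasinilpotent and every power of $S$, in particular the compact one, is nonzero; Theorem~\ref{mainc1} then applies.

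Case~\eqref{ckm11i1} is, I expect, the real obstacle and requires a genuinely different input. Here $T$ is irreducible but $S$ need not be, so a priori no power of $S$ is compact, and the eigenvectors $x_0,x_0^*$ of $T,T^*$ alone do not force $Sx_0=r(T)x_0$: pairing gives only the one-sided estimate $x_0^*(S^nx_0)\le r(T)^n x_0^*(x_0)$, and the hypothesis $r(S)=r(T)$ supplies no matching lower bound to close the gap. The way past this is the Aliprantis--Burkinshaw theorem on dominated compactness (see, e.g.,~\cite{abr}): if $T^m$ is compact, then from $0\le S^m\le T^m$ one gets that $S^{3m}$ is compact, so $S$ is power compact. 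Since $T$ is irreducible and power compact, $r(T)>0$ by de Pagter / Schaefer--Grobler, whence $r(S)=r(T)>0$ ensures the compact power $S^{3m}$ is nonzero. Theorem~\ref{mainc1} again finishes the argument.

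Two routine points used above deserve a line. First, in the band subcase of Case~\eqref{ckm11i2} the non-quasinilpotence of $S$ is invoked through the Schaefer--Grobler theorem, which wants $\sigma$-order continuity; this is inherited from $T$ under domination, for if $x_n\downarrow0$ then $0\le Sx_n\le Tx_n$ with $Tx_n\downarrow0$, and any positive lower bound of $(Sx_n)$ is a lower bound of $(Tx_n)$ and so equals $0$, giving $Sx_n\downarrow0$. Second, the nonzero-power requirement of Theorem~\ref{mainc1} is in every case reduced to the single inequality $r(S)>0$, which the comparison $r(S)=r(T)$ transmits between the two operators. The substance of the corollary is thus concentrated in Case~\eqref{ckm11i1}: the failure of irreducibility to descend to $S$ must be offset by the dominated-compactness theorem, after which both cases collapse onto Theorem~\ref{mainc1}.
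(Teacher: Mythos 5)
Your proposal is correct and follows essentially the same route as the paper: both cases are reduced to Theorem~\ref{mainc1}, with Case~\eqref{ckm11i1} handled by the Aliprantis--Burkinshaw cube theorem transferring power compactness from $T$ down to $S$ (plus $r(T)>0$ to ensure the compact power of $S$ is nonzero), and Case~\eqref{ckm11i2} handled by transferring irreducibility up from $S$ to $T$ via domination. The only cosmetic difference is that in Case~\eqref{ckm11i2} the paper gets $S^k\neq 0$ directly from strict positivity of $S$ (Lemma~\ref{faci1}\,\eqref{faci1i3} or Lemma~\ref{facb2}\,\eqref{facb2i3}) rather than invoking de Pagter/Schaefer--Grobler for $r(S)>0$, but both are valid.
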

\begin{proof}\eqref{ckm11i1} By Lemma~\ref{faci1}\,\eqref{faci1i3} or Lemma~\ref{facb2}\,\eqref{facb2i3}, each power of $T$ is non-zero, hence $r(S)=r(T)>0$ by Corollary~4.2.6, p.~267, \cite{mn}. 
In particular, this implies that each power of $S$ is non-zero. By Aliprantis-Burkinshaw's Cube theorem (Theorem~5.14, p.~283, \cite{ali}), we know $S$ is also power compact. The desired result now follows from Theorem~\ref{mainc1} immediately.\par
\eqref{ckm11i2} Assume $S^k$ is compact. By Lemma~\ref{faci1}\,\eqref{faci1i3} or Lemma~\ref{facb2}\,\eqref{facb2i3}, $S^k>0$. Note that since $S$ is irreducible, so is $T$. Now apply Theorem~\ref{mainc1} again.
\end{proof}
\begin{remark}Lemma~\ref{omco}, Theorem~\ref{mainc1} and Corollary~\ref{ckm11} still hold if we replace compactness involved by strict singularity and assume $r(T)>0$. 
The same lines of arguments with minor modifications will work. For example, let's look at the band irreducible case of Lemma~\ref{omco}. Suppose $S$ and $T$ are now strictly singular. 
Since $0<r(T)\in \sigma(T)$ and $0<r(S)\in\sigma(S)$, $r(T) $ and $r(S)$ are poles of $R(\cdot,T)$ and $R(\cdot,S)$, respectively; 
see Exercise~8, p.~314 and Corollary~7.49, p.~303, \cite{abr}. Replacing Krein-Rutman Theorem with Lemma~\ref{seig}, we get $x_0$ and $x_0^*$ as before. Since $S$ is order weakly compact, Lemma~\ref{fac13} again implies that $x_0^*$ is 
$\sigma$-order continuous. Thus
Lemma~\ref{omco} holds. Theorem~\ref{mainc1} holds because the set of strictly singular operators 
also forms an ideal of $L(X)$; see Corollary~4.62, p.~175, \cite{abr}. Corollary~\ref{ckm11} holds because the power property also holds for strictly singular operators 
(that is, if $0\leq S\leq T$ and $T$ is strictly singular, then $S^4$ is strictly singular; see Corollary~4.2, \cite{ped}).
\end{remark}

Motivated by an idea from \cite{crad}, we can also prove a variant of Corollary~\ref{ckm11} replacing power compactness condition with the spectral radius being a pole of the resolvent. We need the following two lemmas.
\begin{lemma}[\cite{kar}]\label{kkk}If $r(T)=1$ is a simple pole of $R(\cdot,T)$ then $\frac{1}{n}\sum_{i=1}^nT^i\rightarrow P$, the spectral projection of $T$ for $r(T)=1$.
\end{lemma}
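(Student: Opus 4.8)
The plan is to use that a pole is an \emph{isolated} point of the spectrum, so that $1$ comes equipped with a Riesz (spectral) projection $P$, and to treat separately the two summands of the induced $T$-invariant decomposition $X=PX\oplus QX$, where $Q=I-P$, $\sigma(T|_{PX})=\{1\}$ and $\sigma(T|_{QX})=\sigma(T)\setminus\{1\}$. First I would read off from the Laurent expansion $R(\lambda,T)=\sum_{n\ge 1}(\lambda-1)^{-n}(T-1)^{n-1}P+\sum_{n\ge 0}(\lambda-1)^{n}A_n$ that \emph{simplicity} of the pole is precisely the statement $(T-1)P=0$; hence $T$ acts as the identity on $PX$, and $T^iP=P$ for every $i$. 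In particular the averages $A_n:=\frac1n\sum_{i=1}^n T^i$ restrict to $P$ on $PX$, so on this summand $A_n\to P$ holds trivially.

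It then remains to prove $A_n\to 0$ on $QX$. Since $1\notin\sigma(T|_{QX})$, the operator $I-T$ is invertible there, and the finite geometric sum gives $\sum_{i=1}^n (T|_{QX})^i=(I-T|_{QX})^{-1}\bigl(T|_{QX}-(T|_{QX})^{n+1}\bigr)$. Dividing by $n$ and noting that $(I-T|_{QX})^{-1}$ is a fixed bounded operator, the averages on $QX$ tend to $0$ exactly when $\frac1n(T|_{QX})^{n+1}\to 0$. Combined with $T^nP=P$, the whole lemma therefore reduces to the single decay statement $\frac{1}{n}T^n\to 0$. (Alternatively one can route through Abel means: the simple-pole expansion yields the norm limit $(\lambda-1)R(\lambda,T)\to P$ as $\lambda\downarrow 1$, equivalently $(1-r)\sum_{k\ge 0}r^kT^k\to P$ as $r\uparrow 1$, and a Tauberian argument upgrades this Abel convergence to Cesàro convergence under the same gap condition $\frac1n T^n\to 0$.)

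The main obstacle is thus establishing $\frac1n T^n\to 0$, equivalently $\frac1n\|(T|_{QX})^n\|\to 0$. When $1$ is the only spectral value of modulus $1$, so that $r(T|_{QX})<1$, this is immediate since $\|(T|_{QX})^n\|$ decays geometrically, and the lemma follows at once. The delicate point is the peripheral spectrum $\sigma(T)\cap\{|\lambda|=1\}\setminus\{1\}$, which could in principle force super-linear growth of $\|T^n\|$. Here I would lean on positivity: for a positive operator whose spectral radius $r(T)=1$ is a pole of order $m$, the growth of $\|T^n\|$ is governed by $m$ and the peripheral poles inherit order at most $m$ (a standard feature of the Perron--Frobenius theory for positive operators, cf.\ the references in Section~\ref{sec1}), so that a simple pole at $1$ forces $\|T^n\|$ to stay bounded and hence $\frac1n T^n\to 0$. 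Feeding this back into the two summands of $X=PX\oplus QX$ then yields $A_n\to P$, as required.
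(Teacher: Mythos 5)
The paper itself gives no proof of this lemma---it is quoted from Karlin's paper \cite{kar}---so your proposal has to be judged on its own merits. Its skeleton is correct and cleanly executed: simplicity of the pole gives $(T-1)P=0$, hence $T^iP=P$; the geometric-sum identity on $QX$ reduces everything to the single gap condition $\tfrac1n\|T^n\|\to 0$ (this reduction is exactly the easy half of Dunford's uniform ergodic theorem). You are also right that this is where positivity must enter: for a general operator the lemma is false, e.g.\ $T=1\oplus\left(\begin{smallmatrix}-1&1\\0&-1\end{smallmatrix}\right)$ has $r(T)=1$ as a simple pole, yet $\|T^n\|\sim n$ and the Ces\`aro means do not converge.

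The genuine gap is in how you discharge that crucial step. The fact you invoke---``the peripheral poles inherit order at most $m$''---speaks only about peripheral points \emph{already known to be poles}; it does not exclude the scenario that $\sigma(T)$ accumulates on the unit circle away from $1$ (say, contains an arc around $-1$), in which case there are no peripheral poles to decompose along, no spectral splitting of the peripheral part, and no way to conclude that $\|T^n\|$ is bounded. (The cheap positivity estimate $\|R(\lambda,T)\|\le\|R(|\lambda|,T)\|\le C(|\lambda|-1)^{-1}$ outside the disc only yields $\|T^n\|=O(n)$, which is compatible with such accumulation and is not enough for the gap condition.) What your argument actually needs is the much stronger theorem that for a positive operator whose spectral radius is a pole of order $m$, \emph{every} peripheral spectral value is a pole of order at most $m$---in particular the peripheral spectrum is a finite, cyclic set of roots of unity. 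That statement is true (see \cite{scha3}, Ch.~V, \S 5, going back to Lotz and Niiro--Sawashima), but it is a deep result, proved by cyclicity/ultrapower techniques, arguably harder than the lemma itself and historically later than \cite{kar}; citing it in the vague form you used leaves the central step unsupported. Ironically, your parenthetical Abel-means route is the one that closes self-containedly, but you undersold it: for \emph{positive} operators no gap condition is needed, because positivity is itself the Tauberian hypothesis. Since $T^n\geq 0$ and $0\leq A\leq B$ implies $\|A\|\leq\|B\|$ for operators on a Banach lattice, Karamata's proof of the Hardy--Littlewood Tauberian theorem runs verbatim in the operator norm and upgrades $(\lambda-1)R(\lambda,T)\to P$ (which is immediate from the simple pole) directly to $\tfrac1n\sum_{i=1}^nT^i\to P$, with no peripheral spectral analysis at all; this is in essence Karlin's argument.
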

\begin{lemma}\label{csrx}Suppose $T_n\rightarrow T$ and $\sigma(T)\backslash\sigma_{per}(T)$ is closed. Then $r(T_n)\rightarrow r(T)$.
\end{lemma}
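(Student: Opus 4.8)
The plan is to prove the two one-sided inequalities $\limsup_n r(T_n)\le r(T)$ and $\liminf_n r(T_n)\ge r(T)$ separately. The first is the routine direction and is just the general upper semicontinuity of the spectrum under norm convergence, which does not need the hypothesis on the peripheral spectrum. Given $\varepsilon>0$, the resolvent $R(\cdot,T)$ is bounded on the compact set $K=\{\lambda:\mathrm{dist}(\lambda,\sigma(T))\ge\varepsilon,\ |\lambda|\le\|T\|+1\}$, so by the standard Neumann-series perturbation estimate $\lambda-T_n$ is invertible for every $\lambda\in K$ once $n$ is large. Hence $\sigma(T_n)\subseteq\sigma(T)+\varepsilon B$ eventually, which forces $r(T_n)\le r(T)+\varepsilon$ for large $n$, and $\varepsilon$ being arbitrary gives $\limsup_n r(T_n)\le r(T)$.

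The hypothesis enters only in the reverse inequality. Write $\sigma_{per}(T)=\sigma(T)\cap\{|\lambda|=r(T)\}$; since $\sigma(T)$ is compact and the maximal modulus is attained, $\sigma_{per}(T)\neq\emptyset$. The assumption that $\sigma(T)\setminus\sigma_{per}(T)$ is closed says precisely that $\sigma_{per}(T)$ is open in $\sigma(T)$, hence a clopen (spectral) subset of $\sigma(T)$. This lets me surround $\sigma_{per}(T)$ by a rectifiable contour $\Gamma$ that encloses $\sigma_{per}(T)$, avoids the rest of $\sigma(T)$, and—because $\sigma_{per}(T)$ sits on the circle $|\lambda|=r(T)$—can be chosen inside $\{|\lambda|>r(T)-\varepsilon\}$ for any prescribed $\varepsilon>0$. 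I would then form the Riesz projections $P=\frac{1}{2\pi i}\int_\Gamma R(\lambda,T)\,d\lambda$ and $P_n=\frac{1}{2\pi i}\int_\Gamma R(\lambda,T_n)\,d\lambda$.

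The crux is to show $P_n\to P$ while $P\neq0$. Since $\Gamma\cap\sigma(T)=\emptyset$, $R(\cdot,T)$ is uniformly bounded on $\Gamma$, and the resolvent identity $R(\lambda,T_n)-R(\lambda,T)=R(\lambda,T_n)(T_n-T)R(\lambda,T)$ together with the Neumann-series bound gives $R(\cdot,T_n)\to R(\cdot,T)$ uniformly on $\Gamma$ for large $n$; integrating over $\Gamma$ yields $P_n\to P$. As $\sigma_{per}(T)$ is a non-empty spectral set, the restriction of $T$ to $PX$ has spectrum $\sigma_{per}(T)\neq\emptyset$, so $P\neq0$ and therefore $P_n\neq0$ for all large $n$. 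A non-zero Riesz projection forces $\sigma(T_n)$ to meet the interior of $\Gamma$, producing $\mu_n\in\sigma(T_n)$ with $|\mu_n|>r(T)-\varepsilon$, whence $r(T_n)\ge|\mu_n|>r(T)-\varepsilon$. Letting $n\to\infty$ and then $\varepsilon\downarrow0$ gives $\liminf_n r(T_n)\ge r(T)$, and combining the two inequalities finishes the proof.

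I expect the main obstacle to be the lower bound, and inside it the verification that the Riesz projection persists under the perturbation. This is exactly where the spectral-set hypothesis is indispensable: without the isolation of $\sigma_{per}(T)$ from the remainder of $\sigma(T)$, no contour $\Gamma$ separating the two exists, the projection argument collapses, and lower semicontinuity genuinely fails. The uniform resolvent estimates on $\Gamma$ are standard and I would not belabor them.
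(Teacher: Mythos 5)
Your proposal takes essentially the same route as the paper: the upper bound $\limsup_n r(T_n)\leq r(T)$ is quoted as standard (the paper cites Newburgh), and the lower bound comes from persistence of the Riesz projection attached to $\sigma_{per}(T)$, using closedness of $\sigma(T)\setminus\sigma_{per}(T)$ to separate the peripheral spectrum by a contour, uniform convergence $R(\cdot,T_n)\rightarrow R(\cdot,T)$ on that contour via the Neumann/resolvent-identity estimate, and nonvanishing of the spectral projection of a non-empty clopen spectral set. The paper phrases this as a contradiction argument with a concrete contour, namely the two circles $\gamma_{\pm1}$ of radii $r(T)\pm\delta$ bounding a thin annulus around $\sigma_{per}(T)$; yours is the direct version with a general separating cycle.

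However, one step as written has a gap. The final deduction requires the \emph{enclosed region} of $\Gamma$ (the set of points with winding number one), not merely the contour $\Gamma$ itself, to lie in $\{|\lambda|>r(T)-\varepsilon\}$, and the properties you impose on $\Gamma$ do not guarantee this. Concretely, the single circle $|\lambda|=r(T)+\delta$ encloses $\sigma_{per}(T)$, meets no point of $\sigma(T)$, and lies in $\{|\lambda|>r(T)-\varepsilon\}$, yet its interior is a full disc, so knowing that $\sigma(T_n)$ meets the interior gives no lower bound on $r(T_n)$. Even if ``avoids the rest of $\sigma(T)$'' is read in the stronger Riesz sense (winding number zero there), the winding-one region can still dip below modulus $r(T)-\varepsilon$ when part of $\sigma(T)\setminus\sigma_{per}(T)$ happens to lie in the annulus $\{r(T)-\varepsilon<|\lambda|<r(T)\}$: append small clockwise circles around those points to one large circle. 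The standard fix, surely what you intend, is to localize the whole construction: let $d>0$ be the distance from $\sigma_{per}(T)$ to $\sigma(T)\setminus\sigma_{per}(T)$, set $U=\{z:\mathrm{dist}(z,\sigma_{per}(T))<\min(d/2,\varepsilon)\}$, and choose the cycle $\Gamma$ inside $U\setminus\sigma_{per}(T)$ with winding number one at every point of $\sigma_{per}(T)$ and zero at every point off $U$; then the winding-one region is contained in $U\subset\{|\lambda|>r(T)-\varepsilon\}$ and your argument closes. The paper's annulus contour sidesteps this issue automatically, since the region between $\gamma_{-1}$ and $\gamma_{1}$ is by construction contained in $\{|\lambda|>r(T)-\delta\}$.
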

\begin{proof}It is well known that $\limsup_nr(T_n)\leq r(T)$; see \cite{new}.\par We proceed to prove $r(T)\leq \liminf_nr(T_n)$. The argument is an imitation of the proof of continuity of spectral radius on compact operators. Assume $r(T)>\liminf_nr(T_n)$. Take $\epsilon>0$ such that $r(T)>\liminf_nr(T_n)+2\varepsilon$. By passing to a subsequence, we may assume $r(T)>r(T_n)+\epsilon$ for all $n\geq 1$.\par Since $\sigma(T)\backslash\sigma_{per}(T)$ is closed, we can take $\delta>0$ small enough so that $2\delta<\varepsilon$ and $\sigma(T)\backslash\sigma_{per}(T)\subset \{z:|z|<r(T)-2\delta\}$. Now for $j=\pm1$, define curves $\gamma_j(t)=[r(T)+j\delta]e^{it}$, $0\leq t\leq 2\pi$. Then by Cauchy integral theorem, we have
$$\int_{\gamma_1-\gamma_{-1}} R(\lambda,T_n)\mathrm{d}\lambda=0,\;\forall\;n\geq 1.$$
But on the other hand, note that in any unital Banach algebra, for $a$ invertible and $||x||$ small enough,
$||(a-x)^{-1}-a^{-1}||\leq\frac{||x||\,||a^{-1}||^2}{1-||x||\,||a^{-1}||}$; see p.~5, \cite{mul}. Using this, it is easy to see that
$R(\cdot,T_n)\rightarrow R(\cdot,T)$ uniformly on $\gamma_j$'s. Therefore
$$P=\frac{1}{2\pi i}\int_{\gamma_1-\gamma_{-1}} R(\lambda,T)\mathrm{d}\lambda=\lim_n\frac{1}{2\pi i}\int_{\gamma_1-\gamma_{-1}} R(\lambda,T_n)\mathrm{d}\lambda=0,$$
where $P$ is the spectral projection of $T$ for $\sigma_{per}(T)$. This is absurd.
\end{proof}

\begin{theorem}\label{coo}Let $0\leq S\leq T$ be such that $r(T)=r(S)$.
\begin{enumerate}
 \item\label{cooi1} Suppose that $r(T)$ is a pole of $R(\cdot,T)$, then $T=S$ if $T$ is either ideal irreducible, or band irreducible and $\sigma$-order continuous with $\sigma$-order continuous\footnote{By Lemma~\ref{seig}, what we require here is 
the $\sigma$-order continuity of $x_0^*$, not its existence.} $x_0^*>0$ such that $T^*x_0^*=r(T)x_0$;
\item\label{cooi2} Suppose that $r(S)$ is a pole of $R(\cdot,S)$, then $T=S$ if either $S$ is ideal irreducible, or $S$ is band irreducible with $\sigma$-order continuous $x_0^*>0$ such that $S^*x_0^*=r(S)x_0$ and $T$ is $\sigma$-order continuous.
\end{enumerate}
\end{theorem}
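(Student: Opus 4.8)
The plan is to prove \eqref{cooi1} in full and then obtain \eqref{cooi2} by a parallel argument; within \eqref{cooi1} I treat the band irreducible, $\sigma$-order continuous case, the ideal irreducible case being the same with Lemma~\ref{faci1} and Lemma~\ref{gc0}\,\eqref{gc0i1} in place of Lemma~\ref{facb2} and Lemma~\ref{gc0}\,(1'). After rescaling I assume $r(T)=r(S)=1$. Since $r(T)$ is a pole of $R(\cdot,T)$ and $T$ is irreducible, Lemma~\ref{eig23}\,\eqref{eig23i2} supplies: $1$ is a \emph{simple} pole; a weak unit $x_0>0$ with $Tx_0=x_0$; a strictly positive, $\sigma$-order continuous $x_0^*>0$ with $T^*x_0^*=x_0^*$; and the rank-one spectral projection $P$ with $PX=\mathrm{Span}\{x_0\}$ and $P^*X^*=\mathrm{Span}\{x_0^*\}$. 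By Karlin's Lemma~\ref{kkk}, the Ces\`aro averages $C_n^T:=\frac1n\sum_{i=1}^nT^i$ converge in operator norm to $P$.

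Since $Tx_0=x_0$ already provides an eigenvector of $T$, Lemma~\ref{gc0}\,(1') shows (using the strictly positive $T^*$-eigenvector $x_0^*$ we already have) that it suffices to produce a single $g>0$ with $Sg\ge g$; a positive fixed vector $Sg=g$ of $S$ will do, and then $T=S$. I would manufacture $g$ from the domination $0\le C_n^S\le C_n^T\to P$, where $C_n^S:=\frac1n\sum_{i=1}^nS^i$. The idea is to extract, along a subsequence, a norm limit of $C_{n_k}^Sx_0$: the iterates $S^ix_0$ lie in the order interval $[0,x_0]$, so $C_n^Sx_0\in[0,x_0]$ is bounded, and if $C_{n_k}^Sx_0\to g$ in norm then $SC_n^Sx_0-C_n^Sx_0=\frac1n(S^{n+1}x_0-Sx_0)\to0$ in norm forces $Sg=g$.

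Showing that such a limit exists and, above all, that it is \emph{nonzero} is the main obstacle, since the only elementary estimate $S^ix_0\le x_0$ points the wrong way. Nonvanishing must come from $r(S)=1$ together with the compactness of the rank-one limit $P$, and this is exactly where Lemmas~\ref{kkk} and \ref{csrx} enter. First, the polynomial spectral mapping theorem applied to $p(z)=\frac1n\sum_{i=1}^n z^i$ gives $p(1)=1\in\sigma(C_n^S)$ (as $1=r(S)\in\sigma(S)$), whence $r(C_n^S)\ge1$ and $\|C_n^S\|\ge1$ for all $n$, so the averages of $S$ cannot degenerate in norm. Second, I would use the domination by the norm-convergent $C_n^T\to P$ (with $P$ compact) to extract a norm limit $C_{n_k}^S\to Q\ge0$ with $SQ=Q$, and then invoke Lemma~\ref{csrx} — whose hypothesis holds since $\sigma(Q)\setminus\sigma_{per}(Q)$ is closed — to pin $r(Q)=\lim_k r(C_{n_k}^S)=1$, so $Q\neq0$ and $g:=Qx_0>0$. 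The genuinely delicate step, which I expect to require the most care, is precisely this transfer of the compactness of $P$ to the dominated, non-monotone averages $C_n^S$ so as to certify a nonzero positive eigenvector of $S$. (A dual variant is available and meets the same obstacle: as $X^*$ is Dedekind complete and $S^*$ order continuous, $S^{*n}x_0^*\downarrow y^*$ with $S^*y^*=y^*$, and the whole difficulty is to rule out $y^*=0$, after which Lemma~\ref{gc0}\,(2') finishes.)

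For \eqref{cooi2} the roles are reversed: now $r(S)$ is a pole and $S$ is irreducible, so Lemma~\ref{eig23} applies to $S$, giving $Sx_0=x_0$, $S^*x_0^*=x_0^*$ with $x_0$ a weak unit and $x_0^*$ strictly positive, and $C_n^S\to P$ by Karlin; moreover $S\le T$ forces $T$ to be irreducible, because every $T$-invariant closed ideal (band) $J$ is automatically $S$-invariant (for $x\in J_+$, $0\le Sx\le Tx\in J$). The only ingredient still missing for Lemma~\ref{gc0} is a positive eigenvector of $T$ (or $T^*$) at $r(T)=1$: granting one, say $Th=h$ with $h>0$, the relation $S^*x_0^*=x_0^*\ge x_0^*$ lets Lemma~\ref{gc0}\,\eqref{gc0i2} conclude $T=S$ (dually, a $T^*$-eigenvector together with $Sx_0=x_0\ge x_0$ closes it through Lemma~\ref{gc0}\,\eqref{gc0i1}). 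Producing this $T$-eigenvector is the symmetric crux: here $T^*x_0^*\ge x_0^*$ makes $T^{*n}x_0^*$ increasing, and one must bound it above in $X^*$, again combining $r(T)=1$, the spectral-mapping identity $r(C_n^T)=1$, the domination $C_n^S\le C_n^T$, and Lemma~\ref{csrx} to control the spectral radius of the resulting limit. The $\sigma$-order-continuity hypotheses are carried along verbatim as in \eqref{cooi1}.
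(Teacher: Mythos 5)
Your set-up is sound, and in fact the reduction you make is correct: Lemma~\ref{eig23} gives the simple pole, the weak unit $x_0$, the strictly positive $\sigma$-order continuous $x_0^*$, and the rank-one projection $P=x_0^*(\cdot)x_0$; Karlin's Lemma~\ref{kkk} gives $C_n^T\to P$ in norm; and Lemma~\ref{gc0}\,(1') does reduce part \eqref{cooi1} to producing a single $g>0$ with $Sg\geq g$. But the argument has a genuine gap at exactly the step you flag as delicate, and it cannot be closed the way you propose. You want to extract a norm-convergent subsequence $C_{n_k}^S\to Q$ from the domination $0\leq C_n^S\leq C_n^T\to P$. Domination by a norm-convergent sequence of compact (even rank-one) operators gives no norm precompactness of the dominated sequence: already the order interval $\{A\in L(X):0\leq A\leq P\}$ fails to be norm compact for a rank-one positive $P$. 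For instance, on $L_2[0,1]$ with $Pf=\langle f,\mathbf{1}\rangle\mathbf{1}$, the operators $A_nf=\langle f,r_n^+\rangle r_n^+$ built from the positive parts of the Rademacher functions satisfy $0\leq A_n\leq P$ but are uniformly separated in operator norm ($\|A_n-A_m\|\geq \sqrt{3}/4$ for $n\neq m$). Domination transfers compactness only to a power of a single operator (the cube theorem), never convergence of a sequence, so your limit $Q$ — and with it $g=Qx_0$ — is never shown to exist. The dual variant in your parenthesis has the same unresolved nonvanishing issue for $y^*$, and your treatment of part \eqref{cooi2} inherits the identical hole when you need the eigenvector $h$ of $T$.

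The paper's proof sidesteps this by never taking a limit on the $S$-side. Multiply on the right by $S$ \emph{before} taking limits: $C_n^TS\to PS$ in norm automatically, since only the $T$-factor needs to converge, while on the $S$-side one needs only the scalar fact $r(C_n^SS)=1$ for each fixed $n$, which is the spectral mapping theorem applied to $1=r(S)\in\sigma(S)$ — no operator limit required. Then $1=r(C_n^SS)\leq r(C_n^TS)\to r(PS)\leq r(PT)=1$, where Lemma~\ref{csrx} applies because $PS$ is rank one, so $r(PS)=r(PT)=1$. This reduces the theorem to the pair of compact operators $0\leq PS\leq PT$ with $PT=P$ expanding and $\sigma$-order continuous, to which Lemma~\ref{omco} — the workhorse your proposal never invokes — applies and yields $PS=PT$, i.e.\ $(T^*-S^*)x_0^*=0$, whence $T=S$ by strict positivity of $x_0^*$ (Lemma~\ref{copa0}\,\eqref{copa0i1}). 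Part \eqref{cooi2} is handled the same way with $P$ the spectral projection of $S$, noting $PS$ and hence $PT$ is expanding. If you want to salvage your strategy, this is the substitution to make: compare spectral radii of products against the convergent sequence $C_n^TS$ rather than trying to force convergence of $C_n^S$ itself.
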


\begin{proof}\eqref{cooi1} Suppose first $T $ is band irreducible and $\sigma$-order continuous, and $T^*x_0^*=r(T)x_0^*$ for some $\sigma$-order continuous $x_0^*>0$. 
Let $P$ be the spectral projection of $T$ for $r(T)$. We know that $x_0^*$ is strictly positive by Lemma~\ref{facb2}\,\eqref{facb2i2}. Hence, by Lemma~\ref{eig23}, $r(T)>0$ is a simple pole of $R(\cdot,T)$, 
$PX=\ker(r(T)-T)=\mathrm{Span}\{x_0\}$ for a weak unit $x_0>0$ and $P^*X^*=\ker(r(T)-T^*)=\mathrm{Span}\{x_0^*\}$. Without loss of generality, we can assume $r(T)=1$ and 
$P(\cdot)=x_0^*(\cdot)x_0$. Then $PT=P$ is compact, expanding and $\sigma$-order continuous.\par
Since $r(T)$ is a simple pole of $R(\cdot,T)$ and $PS$ and $PT$ are compact, we have, by Lemmas~\ref{kkk} and \ref{csrx}, \begin{align*}1=&\lim_nr\left(\frac{1}{n}\sum_1^nS^iS\right)
\leq\lim_nr\left(\frac{1}{n}\sum_1^nT^iS\right)=r(PS)\\
\leq &r(PT)=\lim_nr\left(\frac{1}{n}\sum_1^nT^iT\right)=1.\end{align*}It follows that $r(PT)=r(PS)=1$. Now applying Lemma~\ref{omco} to $0\leq PS \leq PT$, we know $PT=PS$. Hence, $T=S$ due to strict positivity of $P$.
The case when $T$ is ideal irreducible can be proved similarly.\par
For \eqref{cooi2}, let $P$ be the spectral projection of $S$ for $r(S)$. Then as before, we can show that $PS$ is (strongly) expanding, and hence so is $PT$. A similar argument also gives $r(PT)=r(PS)$.
Now applying Lemma~\ref{omco} to $0\leq PS \leq PT$ again, we obtain $PT=PS$, and thus $T=S$ due to strict positivity of $P$.\end{proof}

\begin{remark}Recall that power compact operators are essentially quasinilpotent; see Definition~7.46, p.~302, \cite{abr}. Recall also that power compact irreducible operators are non-quasinilpotent 
(Corollary~4.2.6, p.~267, \cite{mn}). Hence, the spectral radius of a power compact irreducible operator is a pole of the resolvent; see Corollary~7.49, p.~303, \cite{abr}. Therefore, it is easily seen that Corollary~\ref{ckm11} 
and Theorem~\ref{crad0} can be deduced from Theorem~\ref{coo}.\end{remark}
\section{Further Remarks}\label{lastsss}
When the dominating operator is irreducible, the comparison theorem still holds if both operators bear some even weaker spectral conditions. We begin with a slight generalization of Lemma~\ref{seig}.
\begin{lemma}\label{gc1}
Let $T$ be such that $r(T)^k$ is a pole of $R(\cdot,T^k)$ for some $k\geq 2$. If $r(T)>0$ or $T$ is strictly positive, then $T$ has a positive eigenvector for $r(T)$.
\end{lemma}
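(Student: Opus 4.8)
The plan is to reduce to the positive operator $T^k$, extract a positive eigenvector of $T^k$ for $r(T)^k$, and then symmetrize it over the orbit $y_0, Ty_0, \dots, T^{k-1}y_0$ to manufacture a genuine eigenvector of $T$ for $r(T)$. By the spectral mapping theorem $r(T^k)=r(T)^k$, and this is a pole of $R(\cdot,T^k)$ by hypothesis. Moreover $T^k$ is non-zero in either case: if $r(T)>0$ then $r(T^k)=r(T)^k>0$ already forces $T^k\neq0$, while if $T$ is strictly positive then strict positivity propagates ($x>0\Rightarrow Tx>0\Rightarrow\cdots\Rightarrow T^nx>0$), so $T^n\neq0$ for every $n$. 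Hence Lemma~\ref{seig} applies to the non-zero positive operator $T^k$ and yields some $y_0>0$ with $T^ky_0=r(T)^ky_0$.

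Next I would verify that $r(T)>0$ in both of the stated cases, since the construction below divides by powers of $r(T)$. In the first case this is assumed outright. In the second case, strict positivity gives $T^ky_0>0$; as $T^ky_0=r(T)^ky_0$ and $y_0\neq0$, this is impossible when $r(T)=0$, so $r(T)>0$.

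Writing $\lambda:=r(T)>0$, I would then set
\[
x_0:=\sum_{j=0}^{k-1}\lambda^{-j}\,T^jy_0 .
\]
Every summand is positive and the $j=0$ term equals $y_0$, so $x_0\geq y_0>0$. Applying $T$ and using $T^ky_0=\lambda^ky_0$ to fold the top index back onto the bottom one,
\[
Tx_0=\lambda\sum_{i=1}^{k}\lambda^{-i}T^iy_0=\lambda\Big(\sum_{i=1}^{k-1}\lambda^{-i}T^iy_0+y_0\Big)=\lambda x_0 ,
\]
so $x_0$ is a positive eigenvector of $T$ for $r(T)$, as required.

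The only genuine idea here is this cyclic-orbit averaging; the step I expect to guard most carefully is the strict positivity $x_0>0$. A priori the cyclic sum amounts to projecting $y_0$ onto the $r(T)$-eigenspace of $T$ among the peripheral eigenvalues $r(T)\omega$ with $\omega^k=1$, and such a projection could vanish in a general Banach space. Positivity of $T$ is exactly what rescues the argument: every term $\lambda^{-j}T^jy_0$ is positive, so $y_0$ serves as a strictly positive floor and $x_0$ cannot collapse to $0$. This is also the point at which the hypothesis $r(T)>0$ (either assumed or forced by strict positivity) is indispensable, since it both makes the geometric weights $\lambda^{-j}$ meaningful and guarantees that the $T^k$-eigenvector $y_0$ survives under the powers of $T$.
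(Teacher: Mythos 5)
Your proof is correct and takes essentially the same route as the paper: both apply Lemma~\ref{seig} to $T^k$ to produce a positive eigenvector of $T^k$ for $r(T)^k$ (the paper realizes it as $A_{-m}v$, the leading Laurent coefficient applied to a suitable $v>0$), and then fold it into an eigenvector of $T$ for $r(T)$ by applying the factorization polynomial, your $\sum_{j=0}^{k-1}r(T)^{-j}T^j$ being the paper's $r(T)^{k-1}+r(T)^{k-2}T+\cdots+T^{k-1}$ up to the scalar $r(T)^{-(k-1)}$. The only cosmetic difference is that the paper keeps the unnormalized polynomial, whose value is positive via the $r(T)^{k-1}$-term or the $T^{k-1}$-term according to which hypothesis holds, and thereby skips your preliminary step of deducing $r(T)>0$ in the strictly positive case.
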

\begin{proof}
Suppose that $r(T)^k$ is a pole of order $m\geq 1$. Then Lemma~\ref{seig} implies $(T^k-r(T)^k)^{m-1}P=A_{-m}>0$
where $P$ is the spectral projection of $T^k$ for $r(T)^k$ and $A_{-m}$ is the coefficient of $(\lambda-r(T)^k)^{-m}$ in the Laurent expansion of $R(\lambda,T^k)$ at $r(T)^k$.\par
Take $v>0$ such that $A_{-m}v>0$. Then by the given conditions, $u=:(r(T)^{k-1}+r(T)^{k-2}T+\dots+T^{k-1})A_{-m}v>0$.
Now $(T-r(T))u=(T^k-r(T)^k)A_{-m}v=A_{-m-1}v=0$. Hence $u$ is a positive vector as required.
\end{proof}
\begin{proposition}\label{coo1}Suppose $0\leq S\leq T$, $r(T)=r(S)$, and there exist $k,m\geq 1$ such that $r(T)^k$ is a pole of $R(\cdot,T^k) $ and $r(S)^m$ is a pole of $R(\cdot,S^m)$. 
Then $T=S$ if one of the following is satisfied:
\begin{enumerate}
 \item\label{coo1i1} $T $ is ideal irreducible;
\item\label{coo1i2} $T$ is band irreducible and $\sigma$-order continuous and $S$ is power $\sigma$-order-to-norm continuous.
\end{enumerate}
\end{proposition}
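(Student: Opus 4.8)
The plan is to reduce both cases to Lemma~\ref{gc0}, whose hypotheses require only a positive eigenvector of $T$ for $\lambda=r(T)$ together with a positive functional $x_0^*$ satisfying $S^*x_0^*\geq r(T)x_0^*$ (plus $\sigma$-order continuity of $x_0^*$ in the band case). Thus the entire proof amounts to manufacturing such an $x_0$ and such an $x_0^*$ out of the weaker pole hypotheses on $T^k$ and $S^m$, and then feeding them into Lemma~\ref{gc0}.

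First I would produce the eigenvector of $T$. In either case $T$ is strictly positive --- by Lemma~\ref{faci1}\,\eqref{faci1i3} in case \eqref{coo1i1} and by Lemma~\ref{facb2}\,\eqref{facb2i3} in case \eqref{coo1i2}. Since $r(T)^k$ is a pole of $R(\cdot,T^k)$, Lemma~\ref{gc1} (for $k\geq 2$; Lemma~\ref{seig} directly when $k=1$) then yields $x_0>0$ with $Tx_0=r(T)x_0$. Feeding this back into Lemma~\ref{faci1}\,\eqref{faci1i1} (resp.\ Lemma~\ref{facb2}\,\eqref{facb2i1}) shows $r(T)>0$ and that $x_0$ is quasi-interior (resp.\ a weak unit). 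In particular $r(S)=r(T)>0$, which is exactly what lets me run the same machine on the adjoint.

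Next I would produce the eigenfunctional of $S^*$. Passing to adjoints, $r(S)^m=r(S^*)^m$ is a pole of $R(\cdot,(S^*)^m)=R(\cdot,(S^m)^*)$, and $r(S^*)=r(S)>0$, so Lemma~\ref{gc1} (or Lemma~\ref{seig} when $m=1$) applied to the positive operator $S^*$ gives $x_0^*>0$ with $S^*x_0^*=r(S)x_0^*=r(T)x_0^*$. In case \eqref{coo1i1} this already supplies the data for Lemma~\ref{gc0}\,(2), since $S^*x_0^*=r(T)x_0^*\geq r(T)x_0^*$; hence $T=S$, and note no hypothesis on $S$ beyond its pole condition is needed. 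In case \eqref{coo1i2} I additionally invoke power $\sigma$-order-to-norm continuity of $S$: because $S^*x_0^*=r(T)x_0^*$ with $r(T)\neq 0$, Lemma~\ref{fac13}\,\eqref{fac13i2} makes $x_0^*$ $\sigma$-order continuous, and then Lemma~\ref{gc0}\,(2') delivers $T=S$.

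The individual steps are short; the point that needs care is the transfer of the pole hypotheses. They are stated for the \emph{powers} $T^k$ and $S^m$, whereas Lemma~\ref{gc0} needs honest eigenvectors of $T$ and of $S^*$ for $r(T)$ and $r(S)$ themselves --- precisely the gap bridged by Lemma~\ref{gc1}, which is why strict positivity of $T$ (and $r(S)>0$ for the adjoint) must be established before extracting eigenvectors. I expect the only other delicate point to be the band case: Lemma~\ref{gc0}\,(2') demands a $\sigma$-order continuous eigenfunctional, and this is exactly where the hypothesis that $S$ is power $\sigma$-order-to-norm continuous is consumed, through Lemma~\ref{fac13}\,\eqref{fac13i2}.
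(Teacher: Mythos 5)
Your proof is correct and follows essentially the same route as the paper's: strict positivity of $T$ plus Lemma~\ref{seig}/Lemma~\ref{gc1} to get $Tx_0=r(T)x_0$, then $r(S)=r(T)>0$ via Lemma~\ref{faci1}\,\eqref{faci1i1} or Lemma~\ref{facb2}\,\eqref{facb2i1}, then Lemma~\ref{seig}/Lemma~\ref{gc1} applied to $S^*$, $\sigma$-order continuity of $x_0^*$ via Lemma~\ref{fac13}, and finally Lemma~\ref{gc0}. Your treatment is in fact slightly more explicit than the paper's, spelling out the $k=1$ versus $k\geq 2$ dichotomy and the transfer of the pole hypothesis from $R(\cdot,S^m)$ to $R(\cdot,(S^*)^m)$.
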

\begin{proof}
We only prove \eqref{coo1i2}; the proof of \eqref{coo1i1} is similar. By Lemma~\ref{facb2}\,\eqref{facb2i3}, $T$ is strictly positive. Hence, by Lemma~\ref{seig} or \ref{gc1}, $Tx_0=r(T)x_0$ for some $x_0>0$.
It follows from Lemma~\ref{facb2}\,\eqref{facb2i1} that $r(S)=r(T)>0$. Now
Lemma~\ref{seig} or \ref{gc1} applied to $S^*$ yields $x_0^*>0$ such that $S^*x_0^*=r(S)x_0^*$. We know $x_0^*$ is $\sigma$-order continuous by Lemma~\ref{fac13}. Hence, $T=S$ by Lemma~\ref{gc0}\,(2').
\end{proof}
It is clear that Corollary~\ref{ckm11}\,\eqref{ckm11i1} can be deduced from this proposition.\par
When the dominated operator is irreducible and bears similar spectral conditions, we are able to establish comparison theorems for commuting operators.
\begin{lemma}\label{ss00}Suppose $0\leq S\leq T$, $r(T)=r(S)$, $Sx_0=r(S)x_0$ and $S^*x_0^*=r(S)x_0^*$ for some $x_0>0$ and $x_0^*>0$. Suppose also $TS=ST$. Then $T=S$ if one of the following is satisfied:
\begin{enumerate}
 \item\label{ss00i1} $S$ is ideal irreducible,
\item\label{ss00i2} $S$ is band irreducible, $T$ is $\sigma$-order continuous and $x_0^*$ is strictly positive.
\end{enumerate}
\end{lemma}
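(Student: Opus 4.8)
The plan is to exploit the commutativity $TS=ST$ together with the fact that irreducibility forces the eigenspace $\ker(r(S)-S)$ to be one-dimensional. First I would invoke the Perron--Frobenius type uniqueness result already available: since $Sx_0=r(S)x_0$ and $S^*x_0^*=r(S)x_0^*$ with $x_0,x_0^*>0$, Lemma~\ref{eig2}\,\eqref{eig2i1} (in case~\eqref{ss00i1}) or Lemma~\ref{eig2}\,\eqref{eig2i2} (in case~\eqref{ss00i2}) gives $\ker(r(S)-S)=\mathrm{Span}\{x_0\}$. For case~\eqref{ss00i2} I would first note that $S$ inherits $\sigma$-order continuity from $T$: if $x_n\downarrow0$ then $0\le Sx_n\le Tx_n\downarrow0$ forces $Sx_n\downarrow0$, so Lemma~\ref{eig2}\,\eqref{eig2i2} does apply with the strictly positive $x_0^*$. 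I would also record that $r(S)>0$ and that $x_0$ is a quasi-interior point (case~\eqref{ss00i1}, by Lemma~\ref{faci1}\,\eqref{faci1i1}) or a weak unit (case~\eqref{ss00i2}, by Lemma~\ref{facb2}\,\eqref{facb2i1}).

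The central step is to turn $x_0$ into an eigenvector of $T$. Using $TS=ST$,
\[S(Tx_0)=T(Sx_0)=r(S)\,Tx_0,\]
so $Tx_0\in\ker(r(S)-S)=\mathrm{Span}\{x_0\}$, that is, $Tx_0=cx_0$ for some real scalar $c$. Domination gives $Tx_0\ge Sx_0=r(S)x_0>0$, whence $c\ge r(S)$; on the other hand $c$ is an eigenvalue of $T$, so $|c|\le r(T)=r(S)$. Combining these, $c=r(S)$, and therefore $Tx_0=r(S)x_0=Sx_0$.

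Finally I would feed the identity $Tx_0=Sx_0$ into Lemma~\ref{copa0}: in case~\eqref{ss00i1}, $x_0$ is quasi-interior, so Lemma~\ref{copa0}\,\eqref{copa0i2} yields $T=S$; in case~\eqref{ss00i2}, $x_0$ is a weak unit and $T$ is $\sigma$-order continuous, so Lemma~\ref{copa0}\,\eqref{copa0i3} yields $T=S$.

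The argument is short once the commutativity observation is in place, so I expect the genuine subtleties to be bookkeeping rather than analysis. The first is the $\sigma$-order continuity of $S$ in case~\eqref{ss00i2}, which is not assumed directly and must be extracted from $0\le S\le T$ before Lemma~\ref{eig2}\,\eqref{eig2i2} can be used. The second is the scalar pinning $c=r(S)$: one must check that $c$ is real (it is, since $x_0$ and $T$ are real and $x_0\ne0$, and $\ker(r(S)-S)$ is a real line) and that the eigenvalue bound $|c|\le r(T)$ genuinely combines with $c\ge r(S)=r(T)>0$ to force equality. No delicate estimates are needed beyond these.
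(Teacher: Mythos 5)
Your proposal is correct and follows essentially the same route as the paper: use $TS=ST$ to place $Tx_0$ in the one-dimensional eigenspace $\ker(r(S)-S)=\mathrm{Span}\{x_0\}$ (via Lemma~\ref{eig2}), pin the scalar to $c=r(S)=r(T)$ by domination and the spectral-radius bound, and conclude $T=S$ from $Tx_0=Sx_0$ via Lemma~\ref{copa0}. Your explicit verification that $S$ inherits $\sigma$-order continuity from $T$ through domination is a detail the paper leaves implicit, but it is the same argument.
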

\begin{proof}We only prove \eqref{ss00i2}; the proof of \eqref{ss00i1} is similar. Note that $S$ is band irreducible and $\sigma$-order continuous. Hence, $\ker(r(S)-S)=\mathrm{Span}\{x_0\}$ by Lemma~\ref{eig2}\,\eqref{eig2i2}.
Since $S(Tx_0)= TSx_0=r(S)(Tx_0)$, $Tx_0\in\ker(r(S)-S)$. Hence $Tx_0=cx_0$ for some $c\in\mathbb{R}$. Clearly, $0\leq c\leq r(T)$. 
On the other hand, $cx_0=Tx_0\geq Sx_0=r(T)x_0$ implies $c\geq r(T)$. Hence, $c=r(T)$. It follows that $Tx_0=r(T)x_0=Sx_0$. Since $x_0$ is a weak unit, $T=S$ by Lemma~\ref{copa0}\,\eqref{copa0i3}.
\end{proof}
\begin{proposition}\label{rc2}Let $0\leq S\leq T$ be such that $r(T)=r(S)$ and $ST= TS$. Suppose there exists $m\geq 1$ such that $r(S)^m$ is a pole of $R(\cdot,S^m)$. 
Then $T=S$ if one of the following is satisfied:
\begin{enumerate}
 \item\label{rc2i1} $S$ is ideal irreducible,
\item\label{rc2i2} $S$ is band irreducible and power $\sigma$-order-to-norm continuous, and $T$ is $\sigma$-order continuous.
\end{enumerate}
\end{proposition}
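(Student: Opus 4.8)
The plan is to reduce the whole statement to Lemma~\ref{ss00}, so that the real content is to manufacture positive eigenvectors $x_0>0$ and $x_0^*>0$ of $S$ and $S^*$ at $r(S)$ and, in the band case, to verify that $x_0^*$ is strictly positive. As is done elsewhere in the paper, I would write out only the band irreducible case \eqref{rc2i2}; the ideal case \eqref{rc2i1} is the same argument with Lemma~\ref{faci1} replacing Lemma~\ref{facb2} and with \eqref{ss00i1} replacing \eqref{ss00i2} in Lemma~\ref{ss00} (in that case one does not even need strict positivity of $x_0^*$, nor any order-continuity hypothesis, since Lemma~\ref{faci1}\,\eqref{faci1i3} already gives that $S$ is strictly positive).

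First I would record that $S$ is $\sigma$-order continuous: since $0\leq S\leq T$ and $T$ is $\sigma$-order continuous, any $x_n\downarrow 0$ gives $0\leq Sx_n\leq Tx_n\downarrow 0$, and as $Sx_n$ is decreasing this forces $Sx_n\downarrow 0$. Consequently Lemma~\ref{facb2}\,\eqref{facb2i3} shows that $S$ is strictly positive. This strict positivity is exactly the hook for Lemma~\ref{gc1}: if $m\geq 2$, the hypothesis that $r(S)^m$ is a pole of $R(\cdot,S^m)$ together with strict positivity of $S$ yields some $x_0>0$ with $Sx_0=r(S)x_0$, while if $m=1$ then Lemma~\ref{seig} gives the same conclusion directly. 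With $x_0$ in hand, Lemma~\ref{facb2}\,\eqref{facb2i1} gives $r(S)>0$ (and that $x_0$ is a weak unit).

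Next I would produce the dual eigenvector. Since $(S^*)^m=(S^m)^*$ has the same spectrum, resolvent poles, and spectral radius as $S^m$, the number $r(S)^m=r(S^*)^m$ is a pole of $R(\cdot,(S^*)^m)$; moreover $r(S^*)=r(S)>0$ is now available, so Lemma~\ref{seig} or Lemma~\ref{gc1} applied to $S^*$ produces $x_0^*>0$ with $S^*x_0^*=r(S)x_0^*$. It is here, and only here, that the power $\sigma$-order-to-norm continuity of $S$ is used: since $r(S)\neq 0$, Lemma~\ref{fac13}\,\eqref{fac13i2} makes $x_0^*$ $\sigma$-order continuous, whence Lemma~\ref{facb2}\,\eqref{facb2i2} upgrades it to strict positivity. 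All the hypotheses of Lemma~\ref{ss00}\,\eqref{ss00i2} are then in place, namely $0\leq S\leq T$, $r(T)=r(S)$, $ST=TS$, $S$ band irreducible, $T$ $\sigma$-order continuous, the two eigenvalue equations $Sx_0=r(S)x_0$ and $S^*x_0^*=r(S)x_0^*$, and strict positivity of $x_0^*$; that lemma then delivers $T=S$.

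I expect no genuinely hard step, as the proposition is in essence a bookkeeping assembly of the earlier lemmas. The single point demanding care is the \emph{order} in which the eigenvectors are extracted: Lemma~\ref{gc1} requires either $r(S)>0$ or strict positivity of its operator before it can be invoked, yet $r(S)>0$ is itself only obtained \emph{after} an eigenvector of $S$ has been found. The resolution is to bootstrap through strict positivity of $S$ (which holds for $S$ by irreducibility but is not a priori available for $S^*$) to produce $x_0$ first, deduce $r(S)>0$ from it, and only then apply the pole hypothesis to $S^*$ via $r(S^*)=r(S)>0$. The remaining subtlety is the routine observation that domination by a $\sigma$-order continuous operator transmits $\sigma$-order continuity to $S$, which is what makes Lemma~\ref{facb2} applicable in the first place.
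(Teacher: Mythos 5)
Your proposal is correct and follows essentially the same route as the paper's own proof: bootstrap strict positivity of $S$ from band irreducibility and inherited $\sigma$-order continuity, invoke Lemma~\ref{seig} or \ref{gc1} to get $x_0>0$, deduce $r(S)>0$, apply Lemma~\ref{seig} or \ref{gc1} to $S^*$ to get $x_0^*>0$, upgrade $x_0^*$ to strictly positive via Lemmas~\ref{fac13} and \ref{facb2}, and finish with Lemma~\ref{ss00}\,\eqref{ss00i2}. Your explicit remarks on the ordering issue (eigenvector of $S$ before $r(S)>0$ before dualizing) and on why $(S^*)^m$ inherits the pole hypothesis merely make precise what the paper leaves implicit.
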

\begin{proof}We only prove \eqref{rc2i2}; the proof of \eqref{rc2i1} is similar. Clearly, $S$ is band irreducible and $\sigma$-order continuous. By Lemma~\ref{facb2}\,\eqref{facb2i3}, it is strictly positive. 
Applying Lemma~\ref{seig} or \ref{gc1}, we have $Sx_0=r(S)x_0$ for some $x_0>0$. This in turn implies $r(S)>0$ by Lemma~\ref{facb2}\,\eqref{facb2i1}. 
Now applying Lemma~\ref{seig} or \ref{gc1} to $S^*$, we have $S^*x_0^*=r(S)x_0^*$ for some $x_0^*>0$. By Lemma~\ref{fac13}, $x_0^*$ is $\sigma$-order continuous, and thus is strictly positive by Lemma~\ref{facb2}\,\eqref{facb2i2}. 
This completes the proof by Lemma~\ref{ss00}\,\eqref{ss00i2}.
\end{proof}
It deserves mentioning that the commutation condition in Lemma~\ref{ss00} and Proposition~\ref{rc2} is equivalent to semi-commutation. The following is a slight generalization of Corollary~3.3 in \cite{drnx}.
\begin{lemma}\label{scc}Let $U,V\in L(X)$ be such that $UV\geq VU$ or $UV\leq VU$. Suppose $Ux_0=\lambda x_0$ and $U^*x_0^*=\lambda x_0^*$ for some $x_0>0$ and strictly positive $x_0^*$. Then $UV=VU$ if
one of the following is satisfied:
\begin{enumerate}
 \item\label{scci1} $x_0$ is a quasi-interior point,
\item\label{scci2} $x_0$ is a weak unit, and $U$ and $V$ are $\sigma$-order continuous.
\end{enumerate}
\end{lemma}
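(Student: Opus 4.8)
The plan is to study the commutator $W = UV - VU$ and prove it is zero, since $UV = VU$ is exactly the assertion $W = 0$. By hypothesis either $W \geq 0$ or $-W \geq 0$; as the argument below is insensitive to this sign, I assume $W \geq 0$, so that $W$ is a positive (bounded) operator. The crux is a single pairing computation. Since $Ux_0 = \lambda x_0$ we have $VUx_0 = \lambda Vx_0$, so $x_0^*(VUx_0) = \lambda x_0^*(Vx_0)$; while the relation $U^*x_0^* = \lambda x_0^*$ gives $x_0^*(UVx_0) = (U^*x_0^*)(Vx_0) = \lambda x_0^*(Vx_0)$. Subtracting, $x_0^*(Wx_0) = 0$. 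Because $W \geq 0$ and $x_0 > 0$ we have $Wx_0 \geq 0$, and the strict positivity of $x_0^*$ then forces $Wx_0 = 0$.

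It remains to propagate $Wx_0 = 0$ to $W = 0$, and this is where the two hypotheses on $x_0$ enter. For \eqref{scci1}, recall that a quasi-interior point generates a norm-dense ideal. If $|y| \leq c x_0$, then $0 \leq W|y| \leq c\,Wx_0 = 0$, so $W|y| = 0$; since $Wy_+$ and $Wy_-$ are positive and sum to $W|y| = 0$, both vanish, whence $Wy = Wy_+ - Wy_- = 0$. Thus $W$ vanishes on a dense ideal, and as $W$ is bounded, $W = 0$, i.e.\ $UV = VU$.

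For \eqref{scci2}, note first that since $U$ and $V$ are $\sigma$-order continuous, so is their difference-of-composites $W$, and $W$ is still positive. Here $x_0$ is only a weak unit, so I replace norm-density by order approximation: for any $y \geq 0$ one has $y \wedge n x_0 \uparrow y$ in order, precisely because the band generated by a weak unit is all of $X$. As above, $0 \leq W(y \wedge n x_0) \leq n\,Wx_0 = 0$, so $W(y \wedge n x_0) = 0$ for every $n$; applying $\sigma$-order continuity of $W$ to the increasing sequence $y \wedge n x_0 \uparrow y$ yields $Wy = \lim_n W(y \wedge n x_0) = 0$. Hence $W$ vanishes on the positive cone, and therefore $W = 0$.

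I do not expect a serious obstacle: the whole content is the pairing identity together with the two density principles. The one place needing care is case \eqref{scci2}, where one must verify that $W$ inherits $\sigma$-order continuity from $U$ and $V$, and that $y \wedge n x_0 \uparrow y$ holds in order exactly because $x_0$ is a weak unit, so that the $\sigma$-order continuity of $W$ can legitimately be invoked to pass to the limit.
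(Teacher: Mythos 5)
Your proposal is correct and follows essentially the same route as the paper: the same pairing identity $x_0^*\bigl((UV-VU)x_0\bigr)=0$, strict positivity of $x_0^*$ to get $(UV-VU)x_0=0$, and then the annihilation of a quasi-interior point (respectively, a weak unit, using $\sigma$-order continuity of $UV-VU$) to conclude the commutator vanishes. The only difference is cosmetic: the paper delegates the final step to its Lemma~\ref{copa0} (applied with $S=0$), whereas you prove that density/order-approximation step inline, and you also write out case \eqref{scci1} explicitly where the paper merely calls it similar.
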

\begin{proof}We only prove \eqref{scci2}; the proof of \eqref{scci1} is similar.
Note that $x_0^*((UV-VU)x_0)=(U^*x_0^*)(Vx_0)-x_0^*(VUx_0)=\lambda x_0^*(Vx_0)-x_0^*(V\lambda x_0)=0$. Since $x_0^*$ is strictly positive, we have $(UV-VU)x_0=0$. 
Note also $UV-VU$ is $\sigma$-order continuous. Thus $x_0 $ being a weak unit yields $UV-VU=0$ by Lemma~\ref{copa0}\,\eqref{copa0i3}.
\end{proof}
\medskip
We would like to mention a recent preprint \cite{kit2}, where several similar comparison theorems were obtained independtently using different techniques.\\
\medskip

\noindent\textbf{Acknowledgements.} The author is grateful to his advisor Dr.~Vladimir G.~Troitsky for suggesting these problems and reading the manuscript. 
The author would also like to express his sincere gratitude to the referee for making
many beneficial suggestions and bringing some references into the author's attention.

\end{document}